\numberwithin{equation}{section}
\newtheorem{theorem}{Theorem}[section]
\newtheorem{proposition}[theorem]{Proposition}
\newtheorem{lemma}[theorem]{Lemma}
\theoremstyle{definition}
\newtheorem{remark}[theorem]{Remark}
\newcommand{\R}{\mathbb{R}}
\begin{document}

\title[the Non-degeneracy
and existence of solutions
 ]
{   Non-degeneracy
and existence of new solutions for  the   Schr\"odinger equations
 }

 \author{Yuxia Guo, Monica Musso, Shuangjie Peng and Shusen Yan
}

\address{Department of  Mathematics, Tsinghua University, Beijing, 100084, P.R.China}

\email{yguo@mail.tsinghua.edu.cn   }

\address{Department of Mathematical Sciences University of Bath, Bath BA2 7AY, United
Kingdom}

\email{m.musso@bath.ac.uk}

\address{ School of Mathematics and  Statistics, Central China Normal University, Wuhan, P.R. China}

\email{ sjpeng@mail.ccnu.edu.cn}

\address{ School of Mathematics and  Statistics, Central China Normal University, Wuhan, P.R. China}

\email{ syan@mail.ccnu.edu.cn}

\begin{abstract}

We  consider the following  nonlinear problem
\begin{equation}
\label{eq}
- \Delta u + V(|y|)u=u^{p},\quad u>0 \ \   \mbox{in} \  \R^N, \ \ \
 u \in H^1(\R^N),
\end{equation}
where $V(r)$ is a  positive function, $1<p <\frac{N+2}{N-2}$. We
show that the multi-bump solutions constructed in \cite{WY} is non-degenerate
in a suitable symmetric space. We also use this non-degenerate result
to construct new solutions for \eqref{eq}.

\end{abstract}

\maketitle

\section{Introduction}

Consider  the following nonlinear elliptic problem

\begin{equation}\label{equation}
\begin{cases}
-\Delta u + V(y) u = u^p, \quad u>0, & \text{in}\; \mathbb R^N,\\
 \lim_{|y|\to+\infty}
u(y)=0,
\end{cases}
\end{equation}
where $N\ge 2$, $p\in \bigl(1, \frac{N+2}{N-2}\bigr)$, and $V$ is a continuous function which satisfies

\[
 \lim_{|y|\to+\infty} V(y)=1.
 \]
 The existence of nontrivial  solutions for \eqref{equation}, or the following nonlinear field equations in subcritical case

 \begin{equation}\label{equation1}
\begin{cases}
-\Delta u +  u = Q(x)u^p, \quad u>0, & \text{in}\; \mathbb R^N,\\
 \lim_{|y|\to+\infty}
u(y)=0,
\end{cases}
\end{equation}
 attracts a lot
of attentions in the last four decades. The readers can refer to \cite{BL}-\cite{CDS}, \cite{DN,Lion} and the references therein.
On the other hand, for results on the singularly perturbed problems corresponding to \eqref{equation} and  \eqref{equation1},  we refer
the readers to
\cite{ABC}--\cite{AMN2}, \cite{CNY1,CNY2},\cite{DY}--\cite{DF4}, \cite{NY,R}.

 The functional corresponding to \eqref{equation} is given by

\[
I(u)=\frac12\int_{\mathbb R^N} \bigl(|\nabla u|^2 +V(y) u^2\bigr)-\frac1{p+1}\int_{\mathbb R^N} |u|^{p+1},\quad u\in H^1(\mathbb R^N).
\]

 It is
well-known   that the following problem has a unique solution  $U$

\begin{equation}\label{10-18-4}
\begin{cases}
-\Delta u +u= u^{p},\;\;u>0, &\text{in}\; \R^N,\\
u(y)\to 0, &\text{as}\; |y|\to \infty.
\end{cases}
\end{equation}
satisfying $U(y)=U(|y|)$, $U'<0$.  We denote   $U_{x_j}(y)=U(y-x_j)$.

For any  integer $k>0$ and $R>0$ large, we define

\[
D_{k, R}=\bigl\{ (x_1,\cdots, x_k):\;  x_j\in \mathbb R^N, \; |x_j|\ge R, j=1, \cdots, k,  \, |x_i-x_j|\ge R, i\ne j\bigr\}.
\]
Then for any $(x_1,\cdots, x_k)\in D_{k, R}$, the  function
$
\sum_{j=1}^k U_{x_j}
$
is an approximate solution of \eqref{equation}. A natural question is whether we can
make a small correction for this approximate solution to obtain a true solution for \eqref{equation}.

Direct computations give

\begin{equation}\label{1-23-4}
I\bigl( \sum_{j=1}^k U_{x_j}\bigr)\approx k A + B_1 \sum_{j=1}^k\bigl( V(x_j)-1\bigr) - B_2\sum_{i\ne j} U(|x_i-x_j|),
\end{equation}
where $B_1$ and $B_2$ are some positive constants, and  $A= I(U)$.

To find a stable critical point for the function on the right hand side of \eqref{1-23-4} in $D_{k, R}$, the main difficulty is
that the terms in this function may be of different order, depending on the location of each $x_j$. To avoid this difficulty, in
\cite{WY},  it assume that
  $V(y)$ is radial, and  the following problem is studied

\begin{equation}
\label{1.4}
 -\Delta u +V(|y|)u = u^p, u >0 \ \  \mbox{in}\  \R^N, \ \
  u \in H^1(\R^N).
\end{equation}

We can see easily  that if the function on the right hand side of \eqref{1-23-4} has a critical point in $D_{k, R}$, then $V(y)-1$ can not be
negative near the infinity. From this observation, in \cite{WY}, the following condition is imposed.

(V):  There  are constants $a>0$, $\alpha>1$,  and $\gamma>0$,
such that

\begin{equation}\label{V}
V(r)= 1+\frac a {r^\alpha} +O\bigl(\frac1{r^{\alpha+\gamma}}\bigr),
\end{equation}
as $r\to +\infty$.

Let

\[
x_j=\bigl(r \cos\frac{2(j-1)\pi}k, r\sin\frac{2(j-1)\pi}k,0\bigr),\quad j=1,\cdots,k,
\]
where $0$ is the zero vector in $\R^{N-2}$.  Then, it is easy to calculate

\begin{equation}\label{1-24-4}
I\bigl( \sum_{j=1}^k U_{x_j}\bigr)\approx k A + \frac{ a B_1 k }{ r^\alpha}  - B_2 kU(|x_2-x_1|).
\end{equation}
 For $k>0$ large, the function $\frac{ a B_1  }{ r^\alpha}  - B_2 U(|x_2-x_1|)$  has a local maximum point
$r_k\in [r_0 k \ln k, r_1
 k \ln k]$, where    $r_1>r_0>0$  are some constants.  Therefore, \eqref{1.4} has a solution with $k$ bumps near infinity provided $k$ is large enough. To
 be more precisely, we set
 $y=(y',y'')$, $y'\in \R^2$, $y''\in \R^{N-2}$.  Define
\[
\begin{split}
H_{s}=\bigl\{ u: &   u\;\text{is even in} \;y_2,\cdots, y_N,\\
& u(r\cos\theta , r\sin\theta, y'')=
u(r\cos(\theta+\frac{2\pi j}k) , r\sin(\theta+\frac{2\pi j}k), y'')
\bigr\}.
\end{split}
\]

Let
\[
W_r(y)=\sum_{j=1}^k U_{x_j}(y).
\]

The following result is proved in \cite{WY}.

{\bf Theorem~A.}   {\it Suppose that $V(r)$ satisfies \eqref{V}. Then there is an
integer $k_0>0$, such that for any integer  $k\ge k_0$, \eqref{1.4}
has a solution $u_k$ of the form
\[
u_k = W_{r_k}(y)+\omega_k,
\]
where  $\omega_k\in H_s\cap  H^1(\R^N)$,   $r_k \in [r_0 k \ln k, r_1
 k \ln k]$ and as $k\to +\infty$,
\[
\int_{\R^N} \bigl(|D \omega_k |^2+\omega_k^2\bigr)\to 0.
\]
}

 If $N\ge 4$,  for any large integer $n>0$, we let

 \[
p_j=\Bigl(0,0, t \cos\frac{2(j-1)\pi}n, t\sin\frac{2(j-1)\pi}n,0,\cdots,0\Bigr),\quad j=1,\cdots,n.
\]

  We are now interested in finding a new solution to \eqref{1.4}
whose shape is, at main order,
\begin{equation}\label{86-3-4}
u\approx \sum_{j=1}^k U_{x_j} +\sum_{j=1}^n U_{p_j}.
\end{equation}

We calculate

\begin{equation}\label{88-3-4}
\begin{split}
&I\Bigl( \sum_{j=1}^k U_{x_j} +\sum_{j=1}^n U_{p_j}  \Bigr)\\
 \approx &k  A + \frac{ a B_1 k }{ r^\alpha}  - B_2 kU(|x_2-x_1|)\\
  &+nA + \frac{ a B_1 n }{ t^\alpha}  - B_2 kU(|p_2-p_1|).
  \end{split}
\end{equation}
 Similar to \eqref{1-23-4}, if $n>>k$, the terms on the right side hand of \eqref{88-3-4} are of
 different order. In other words, it is hard to see the contribution to the energy  from the bumps $U_{p_j}$.
 Therefore, it is very difficult to use a reduction
 argument directly to construct solutions of the form \eqref{86-3-4}.

 Following the idea in \cite{GMPY}, for any fixed large integer $k>0$, we will use  $u_k +\sum_{j=1}^n U_{p_j}$ as an approximate
 solution for \eqref{1.4}. The key point that we can make a small correction for $u_k +\sum_{j=1}^n U_{p_j}$  to obtain a true solution
 for \eqref{1.4} is to prove
  the non-degeneracy of the solution $u_k$ in $H_s\cap  H^1(\R^N)$ in the sense that the
following linearized operator

 \begin{equation}\label{2-22-12}
 L_k \xi= -\Delta \xi + V(|y|) \xi -p u_k^{p-1}\xi,
 \end{equation}
 has trivial kernel in  $H_s\cap  H^1(\R^N)$.
 To prove such non-degeneracy result, we need to  impose the following conditions on $V$

\begin{equation}\label{V1}
V(r)= 1+\frac {a_1} {r^\alpha} +\frac {a_2} {r^{\alpha+1}} +O\bigl(\frac1{r^{\alpha+2}}\bigr),
\end{equation}

\begin{equation}\label{V2}
V'(r)= -\frac {a_1\alpha} {r^{\alpha+1}} -\frac {a_2(\alpha+1)} {r^{\alpha+2}} +O\bigl(\frac1{r^{\alpha+3}}\bigr),
\end{equation}
where  $\alpha>\max\bigl(\frac 4{p-1}, 2\bigr)$, $a_1>0$  and $a_2$ are some   constants.

The main result of this paper is the following.

 \begin{theorem}\label{th11}

 Suppose that $V(r)$ satisfies \eqref{V1}  and \eqref{V2}. Let $\xi\in H_s\cap H^1(\mathbb R^N)$ be a solution of $L_k \xi=0$. Then $\xi=0$.

 \end{theorem}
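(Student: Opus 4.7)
The plan is to argue by contradiction. Suppose $\xi_k\in H_s\cap H^1(\R^N)$ is a nontrivial solution of $L_k\xi_k=0$, and normalise it in a weighted $L^\infty$-type norm $\|\cdot\|_*$ of the form
\[
\|\xi_k\|_* \;=\; \sup_{y\in\R^N}\frac{|\xi_k(y)|}{\sum_{j=1}^{k}(1+|y-x_j|)^{-\tau}}, \qquad \|\xi_k\|_* = 1,
\]
for an exponent $\tau>0$ consistent with the Lyapunov--Schmidt norm of \cite{WY}; this makes $\xi_k$ pointwise controlled and forces localisation at the bumps $x_j$.

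Step 1 (blow-up at each bump). Set $\tilde\xi_k(z)=\xi_k(z+x_1)$. Elliptic regularity, together with $u_k(z+x_1)\to U(z)$ in $C^2_{\mathrm{loc}}(\R^N)$, gives along a subsequence $\tilde\xi_k\to \eta$ with $-\Delta\eta+\eta=pU^{p-1}\eta$ in $\R^N$. The classical non-degeneracy of the radial ground state $U$ yields $\eta=\sum_{i=1}^N b_i\partial_i U$. Since the evenness of $\xi_k$ in $y_2,\ldots,y_N$ is preserved by the translation $z=y-x_1$ (as $x_1=(r_k,0,\ldots,0)$), $\eta$ is even in $z_2,\ldots,z_N$, forcing $b_2=\cdots=b_N=0$ and hence $\eta=b\,\partial_1 U$ for some $b\in\R$. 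The $\tfrac{2\pi}{k}$-rotational symmetry of $H_s$ propagates this to every other bump: the analogous limit at $x_j$ is $b\,\nabla U(\cdot)\cdot \bar x_j$ with $\bar x_j=x_j/|x_j|$, and the scalar $b$ is the \emph{same} at every bump.

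Step 2 (local Pohozaev identity). Combining $L_k\xi_k=0$ with $-\Delta u_k+V u_k=u_k^p$, one derives after integration by parts, on any smooth bounded domain $\Omega\subset\R^N$,
\[
\int_{\Omega}\partial_l V\,u_k\xi_k\,dy \;=\; \int_{\partial\Omega}\!\bigl[-\partial_\nu\xi_k\,\partial_l u_k-\partial_\nu u_k\,\partial_l\xi_k+\nabla u_k\!\cdot\!\nabla\xi_k\,\nu_l+V u_k\xi_k\nu_l-u_k^p\xi_k\nu_l\bigr]dS.
\]
Take $\Omega=B(x_1,\rho)$ with $\rho$ comparable to $|x_1-x_2|/2$ and $l=1$. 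Expanding via \eqref{V1}--\eqref{V2} yields
\[
\partial_1 V(z+x_1) \;=\; -\frac{a_1\alpha}{r_k^{\alpha+1}} + \frac{a_1\alpha(\alpha+1)\,z_1-a_2(\alpha+1)}{r_k^{\alpha+2}} + O\bigl(r_k^{-(\alpha+3)}\bigr);
\]
using $u_k\approx U_{x_1}$, $\xi_k\approx b\,\partial_1 U_{x_1}$ in $\Omega$ together with the identities $\int_{\R^N}U\partial_1 U=0$ and $\int_{\R^N}z_1U\partial_1 U=-\tfrac{1}{2}\int_{\R^N}U^2$, the left-hand side reduces to a leading contribution $-\,b\,\frac{a_1\alpha(\alpha+1)}{2\,r_k^{\alpha+2}}\int_{\R^N}U^2$ of order $r_k^{-(\alpha+2)}$. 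The boundary terms on $\partial\Omega$ are computed using the refined expansion $u_k\approx U_{x_1}+U_{x_2}+U_{x_k}$ and $\xi_k\approx b(\partial_1 U_{x_1}+\nabla U_{x_2}\!\cdot\!\bar x_2+\nabla U_{x_k}\!\cdot\!\bar x_k)$ near $\partial\Omega$, and have to be matched to the same $r_k^{-(\alpha+2)}$ precision. Equating the two sides forces $b=0$.

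Step 3 (contradiction). With $b=0$ at every bump, $\tilde\xi_k\to 0$ locally uniformly near each $x_j$. A linear estimate for $L_k$ in the $\|\cdot\|_*$-norm, entirely parallel to the invertibility used in the reduction of Theorem~A (far from the bumps $V-pu_k^{p-1}\approx 1$ is coercive; near each bump the single approximate-kernel direction $\partial_r U_{x_j}$ has just been shown to have zero coefficient), upgrades this local smallness to $\|\xi_k\|_*=o(1)$, contradicting $\|\xi_k\|_*=1$.

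The delicate step is Step~2. Because $\int_{\R^N}U\partial_1 U=0$, the leading $r_k^{-(\alpha+1)}$ contribution in $\partial_1 V$ drops out, so the mechanism forcing $b=0$ sits at order $r_k^{-(\alpha+2)}$; this is precisely where \eqref{V2} is needed, as $a_2$ enters explicitly in the expansion. The real analytic difficulty is that the boundary terms naturally appear at the larger scale $U(|x_1-x_2|)\sim r_k^{-\alpha}$, so one must track cancellations between the dominant contributions of $U_{x_1}$ and $U_{x_2},U_{x_k}$ on $\partial\Omega$ to extract the true $r_k^{-(\alpha+2)}$ remainder. The hypothesis $\alpha>\max\bigl(\tfrac{4}{p-1},2\bigr)$ provides exactly the room to perform this matching and to make the $\|\cdot\|_*$-coercivity in Step~3 go through, in the spirit of \cite{WY,GMPY}.
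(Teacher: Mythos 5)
Your overall strategy matches the paper's: argue by contradiction with a weighted $\|\cdot\|_*$-normalisation, blow up at each bump to identify the limit profile $b\,\partial_1 U$, use a local Pohozaev-type bilinear identity on balls $B(x_1,|x_1-x_2|/2)$ to force $b=0$, then close with a linear estimate. Steps 1 and 3 are faithful to the paper's argument (one cosmetic remark: the paper uses the exponential weight $\sum_j e^{-\tau|y-x_j|}$, not the polynomial $\sum_j(1+|y-x_j|)^{-\tau}$; the exponential decay is what makes the Green's-function iteration in Step~3 work cleanly).

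However, your description of the mechanism in Step~2 contains a genuine quantitative misconception. You assert that the cancellations in the boundary integrals bring them down to order $r_k^{-(\alpha+2)}$, so that they ``match'' the interior term $\int_\Omega u\xi\,\partial_1 V \sim b\,r_k^{-(\alpha+2)}$. This is not what happens. The boundary bilinear form $L\bigl(\sum_j U_{x_j},\sum_i\partial_r U_{x_i},\Omega\bigr)$ does undergo a $1/k^2$-size cancellation — coming from $(x_1-x_2)_1 = r_k(1-\cos\frac{2\pi}{k}) \sim r_k/k^2$ — and becomes, in the paper's notation, $(B'+o(1))\,U(|x_1-x_2|)/k^2$. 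But this is \emph{not} of order $r_k^{-(\alpha+2)}$: using the balance relation that determines $r_k$ in the reduction (equation \eqref{2-18-4}),
\[
\frac{1}{r_k^{\alpha+1}} \sim \frac{U(|x_2-x_1|)}{k},\qquad\text{so}\qquad
\frac{U(|x_1-x_2|)}{k^2}\sim\frac{1}{k\,r_k^{\alpha+1}}=\frac{r_k}{k}\cdot\frac{1}{r_k^{\alpha+2}}\gg\frac{1}{r_k^{\alpha+2}},
\]
since $r_k\sim k\ln k$. Thus the boundary term strictly \emph{dominates} the interior term by a factor of order $\ln k$, and the conclusion $b_m\to 0$ follows because the dominant term of the identity carries the coefficient $b_m$, while everything else — the interior Pohozaev term and all error terms — is of lower order. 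Your phrase ``have to be matched to the same $r_k^{-(\alpha+2)}$ precision'' suggests a balance between two comparable quantities, which is not the structure here; it is a dominance argument, not a matching. Crucially, that dominance cannot be established without the relation \eqref{2-18-4} (the necessary condition on $r_k$ coming from the existence proof), and your sketch never invokes it. You also implicitly need the decomposition $\xi_m = b_m\sum_j\partial_r U_{x_j}+\xi_m^*$ together with the smallness estimate $\|\xi_m^*\|_*\le Cr_k^{-\alpha}+Ce^{-\min[\frac p2-\tau,1]|x_2-x_1|}$ from Lemma~\ref{l10-28-2} to control the lower-order parts of $\xi$ entering the Pohozaev identity; your sketch substitutes $\xi_k\approx b\,\partial_1 U_{x_1}$ without justifying the size of the remainder. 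Fill in both of these ingredients and your Step~2 will close in the manner the paper does.
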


 Similar problem is studied for the following prescribed scalar curvature equation

 \begin{equation}\label{1-13-5}
- \Delta u =K(y)u^{2^*-1},\quad u>0 \ \ \   \mbox{in} \  \R^N, \ \
 u \in D^{1, 2}(\R^N).
\end{equation}
where $2^* = {2N \over N-2}$, under the condition that $K(r)$ has a non-degenerate local maximum point $r_0>0$. There are
some technical differences in the study of the non-degeneracy of solutions for the nonlinear Schr\"odinger equations and
the prescribed scalar curvature equation. For the prescribed scalar curvature equation, the non-degeneracy of the local maximum point $r_0>0$
plays an essential role in using the local Pohozaev identities to kill the possible nontrivial kernel of the linear operator.
For the nonlinear Schr\"odinger equations, we can regard  condition \eqref{V2} as $V$ has a critical point at infinity. But it is
not clear that such an expansion implies certain non-degeneracy of $V$ at infinity. This difference, together with the exponential decay
of the solution $U$ of \eqref{10-18-4}, leads us to use the local Pohozaev identities in quite different ways.

A direct consequence of Theorem~\ref{th11} is the following result for the existence   of new solutions  for \eqref{1.4}.

 \begin{theorem}\label{th12}

 Suppose that $V(r)$ satisfies the assumptions in Theorem~\ref{th11} and $N\ge 4$. Let $u_k$ be a solution in Theorem~A and $k>0$ is a large even number. Then there is an
integer $n_0>0$, depending on $k$, such that for any even number  $n\ge n_0$,  \eqref{1.4}
has a solution of the form \eqref{86-3-4} for some $t_n\to +\infty$.

 \end{theorem}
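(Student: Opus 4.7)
The plan is to perform a Lyapunov--Schmidt reduction using $W_{r_k,t}:=u_k+\sum_{j=1}^n U_{p_j}$ as an approximate solution, where $u_k$ is the solution from Theorem~A and $t$ is the radius parameter for the new bumps $p_j$ in the $(y_3,y_4)$-plane. The crucial input is the non-degeneracy result of Theorem~\ref{th11}, which supplies the invertibility needed on the component of the error supported near $u_k$. First I would fix a symmetric function space $H_{s,n}$ consisting of functions in $H^1(\R^N)$ that lie in $H_s$ (the $k$-fold symmetry in $(y_1,y_2)$ used by $u_k$) and are, in addition, invariant under the $2\pi/n$ rotation in the $(y_3,y_4)$-plane and even in $y_4,y_5,\dots,y_N$. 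Since $N\ge 4$ and $k,n$ are even, the configuration $u_k+\sum_j U_{p_j}$ does belong to $H_{s,n}$ and the Palais principle applies: any critical point of $I$ in $H_{s,n}$ is a solution of \eqref{1.4}.

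Next I would set up the linear theory. Seek $\phi\in H_{s,n}$ satisfying the orthogonality condition $\langle \phi, Z_t\rangle=0$, where $Z_t=\sum_{j=1}^n \partial_t U_{p_j}$ is, by symmetry, the only non-trivial kernel direction generated by translations of the $p_j$-bumps. Define the linearized operator $\mathcal L\phi:=-\Delta\phi+V(|y|)\phi-p\,W_{r_k,t}^{p-1}\phi$ and its projection $\mathcal L^\perp$ onto the orthogonal complement of $Z_t$. The main linear estimate I need is that $\mathcal L^\perp$ is invertible with norm bounded uniformly in $t$, in a suitable weighted $L^\infty$ norm tailored to the $k+n$ bumps (as in \cite{WY}). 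The proof follows the standard blow-up/contradiction argument: a hypothetical counterexample $\phi_n$ would concentrate either near the $p_j$ (giving a kernel element of $-\Delta+1-pU^{p-1}$, killed by the orthogonality to $Z_t$ and the symmetry) or near the $u_k$ bumps (giving a non-trivial element of $\ker L_k$ in $H_s$, ruled out precisely by Theorem~\ref{th11}). Here is the point at which the non-degeneracy theorem does irreplaceable work.

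With $\mathcal L^\perp$ invertible, the nonlinear remainder is handled by a routine contraction. One writes the projected equation $\mathcal L\phi=N(\phi)+E$ with orthogonality, where $E=-\Delta W+V W-W^p$ is the error of the approximate solution and $N(\phi)=(W+\phi)^p-W^p-pW^{p-1}\phi$. Using the exponential decay of $U$, the interaction error between $u_k$ and $\sum U_{p_j}$ is exponentially small once $t$ is moderately large, and the interaction among the $p_j$ themselves is controlled exactly as in the derivation of Theorem~A. A fixed point then yields a unique solution $\phi=\phi(t)$ with quantitative estimates, depending smoothly on $t$ in an interval $t\in[r_0' n\ln n,\,r_1' n\ln n]$.

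Finally I would reduce to a one-parameter variational problem. Evaluating the reduced energy gives
\begin{equation*}
F(t):=I(u_k+{\textstyle\sum_j} U_{p_j}+\phi(t))=I(u_k)+nA+\frac{a_1 B_1 n}{t^\alpha}-B_2 n\,U(|p_1-p_2|)+o\bigl(n t^{-\alpha}\bigr),
\end{equation*}
the $u_k$--$U_{p_j}$ cross terms being exponentially small. Exactly as in the proof of Theorem~A, the leading part has an interior maximum $t_n\in[r_0' n\ln n,\,r_1' n\ln n]$, which is stable under the $o(\cdot)$ perturbation; a maximum point of $F$ on this interval yields the required critical point, proving that the Lagrange multiplier vanishes and producing the solution of \eqref{1.4} in the form \eqref{86-3-4}. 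The main obstacle in the program is the linear estimate on $\mathcal L^\perp$: controlling $\phi$ simultaneously near the ``slow'' $u_k$-cluster and the ``fast'' $p_j$-cluster whose geometric scales (and indeed whose very existence mechanisms) differ, where the non-degeneracy of $u_k$ inside $H_s$ is essential precisely to prevent spurious kernel contributions from the $u_k$ region.
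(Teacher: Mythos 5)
Your proposal is correct and follows essentially the same route as the paper's Section~4: the same approximate solution $u_k+\sum_j U_{p_j}$, the same symmetric space, the same contradiction argument for the linear estimate in which Theorem~\ref{th11} kills the would-be kernel element coming from the $u_k$ cluster (cf.\ Lemma~\ref{l20-2-4}), and the same one-parameter reduced energy $F(t)$ maximized on $[t_0 n\ln n, t_1 n\ln n]$. The only cosmetic difference is your preference for a weighted $L^\infty$ framework over the paper's $H^1$ normalization $\|\xi_n\|^2=n$ with weak limits; both are standard and interchangeable here.
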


 This paper is organized as follows. In section~2, we will revisit the proof of Theorem~A in order to obtains some
 extra estimates needed in the proof of Theorem~\ref{th11}. The main result on the non-degeneracy of the solutions will be
 proved in section~3, while the construction of the new solutions will be carried out in section~4.

 \section{ revisit the existence problem}

In this section, we will briefly revisit the existence problem for \eqref{1.4} and give a different proof of Theorem~A, so that
we can obtained extra estimates which are needed in the proof of the non-degeneracy result.

We denote
 \[
Z_j=\frac{\partial U_{x_j}}{\partial r},\quad j=1,\cdots,k,
  \]
where $x_j=\bigl(r \cos\frac{2(j-1)\pi}k,
r\sin\frac{2(j-1)\pi}k,0\bigr) $.

Let

\[
\begin{split}
\tilde H_{s}=\bigl\{ u: &   u\;\text{is even in} \;y_2;\;  u(y', y'')= u(y', |y''|)\\
& u(r\cos\theta , r\sin\theta, y'')=
u(r\cos(\theta+\frac{2\pi j}k) , r\sin(\theta+\frac{2\pi j}k), y'')
\bigr\},
\end{split}
\]

\[
\tilde E_k=\bigl\{ v:  v\in \tilde H_s\cap C(\mathbb R^N),\; \sum_{j=1}^k \int_{\R^N} U_{x_j}^{p-1} Z_j v=0, \;j=1,\cdots,k\bigr\},
\]
and

\begin{equation}\label{1-20-4}
 S_k= \bigl[ (\frac \alpha{2\pi}-\beta)k\ln k, (\frac \alpha{2\pi}+\beta) k\ln k\bigr],
\end{equation}
where $\alpha$ is the constant in the expansion for $V$, and  $\beta>0$
is a small constant.

In this paper, we always assume that $r\in S_k$.

Take a small constant $\tau >0$.  Define the norm

\[
\| u\|_*= \sup_{y\in \mathbb R^N} \frac{|u(y)|}{\sum_{j=1}^k e^{-\tau |y-x_j|}}.
\]

Let

\[
 Lv= -\Delta v +V(|y|) v- p W_r^{p-1} v.
\]

  For  $f\in \tilde H_s\cap C(\mathbb R^N)$,  we consider the following problem

\begin{equation}\label{1-25-4}
Lv = f+  b_k \sum_{j=1}^k  U_{x_j}^{p-1} Z_j
\end{equation}
for some  $
v\in \tilde E_k
$ and $a_k\in \mathbb R$.
We have

\begin{lemma}\label{l21}
 Assume that $(a_k, v_k)$ solves \eqref{1-25-4} for $f= f_k$. If $\| f_k\|_* \to 0$  as $k\to +\infty$,
then $\| v_k\|_* \to 0$  as $k\to +\infty$.

\end{lemma}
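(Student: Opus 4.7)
The plan is a proof by contradiction. Suppose the conclusion fails, so there are $k_n\to\infty$ and solutions $(b_{k_n},v_{k_n})$ of \eqref{1-25-4} with $\|f_{k_n}\|_*\to 0$ yet $\|v_{k_n}\|_*\ge c_0>0$. After normalizing $\|v_{k_n}\|_*=1$ and dropping the subscript $n$, I would aim to show $\|v_k\|_*\to 0$, contradicting the normalization. The strategy is first to reduce control of the $\|\cdot\|_*$ norm to control of $v_k$ on small fixed balls around each bump, and then to perform a blow-up at a single bump, using the symmetries in $\tilde H_s$ to force the local sup-norm to vanish.

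For the reduction step I would set up a barrier. The function $\phi_k(y)=\sum_{j=1}^k e^{-\tau|y-x_j|}$ satisfies, for $\tau$ small, $-\Delta\phi_k+\phi_k\ge(1-\tau^2)\phi_k$ in the distributional sense, and since $W_r^{p-1}$ is exponentially small off the bumps, $L\phi_k\ge \tfrac12\phi_k$ on $\Omega_R:=\R^N\setminus\bigcup_j B_R(x_j)$ for a fixed large $R$. The right side of \eqref{1-25-4} is pointwise bounded by $(\|f_k\|_*+C|b_k|)\phi_k$, and on $\partial B_R(x_j)$ one has $|v_k|\le e^{\tau R}\|v_k\|_{L^\infty(B_R(x_j))}\phi_k$. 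The maximum principle then yields
\begin{equation*}
\|v_k\|_*\le C\Bigl(\|f_k\|_*+|b_k|+\max_j\|v_k\|_{L^\infty(B_R(x_j))}\Bigr).
\end{equation*}
The Lagrange multiplier $b_k$ is controlled by testing \eqref{1-25-4} against $Z_l$ and using $-\Delta Z_l+Z_l-pU_{x_l}^{p-1}Z_l=0$: the left side becomes $\int v_k[(V-1)Z_l+p(U_{x_l}^{p-1}-W_r^{p-1})Z_l]$, which is $o(1)\|v_k\|_*$ since $V-1=O((k\ln k)^{-\alpha})$ near the effective support of $Z_l$ and the bump interactions are exponentially small, while on the right the matrix $(\int U_{x_i}^{p-1}Z_iZ_l)_{i,l}$ is diagonally dominant. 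Hence $|b_k|=o(1)$.

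The heart of the argument is the blow-up analysis. Fix the bump at $x_1$ and set $\tilde v_k(y)=v_k(y+x_1)$. The barrier estimate gives $|\tilde v_k(y)|\le C\phi_k(y+x_1)$, which is uniformly bounded on compact sets and tends pointwise to $Ce^{-\tau|y|}$ as the remaining bumps escape to infinity. Elliptic regularity delivers a subsequential limit $\tilde v_\infty\in C^1(\R^N)$ solving
\begin{equation*}
-\Delta\tilde v_\infty+\tilde v_\infty-pU^{p-1}\tilde v_\infty=0,\qquad |\tilde v_\infty(y)|\le Ce^{-\tau|y|}.
\end{equation*}
Thus $\tilde v_\infty$ lies in the kernel of the linearized operator at $U$, spanned by $\partial_{y_1}U,\dots,\partial_{y_N}U$ by the classical non-degeneracy theorem of Kwong. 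The symmetries built into $\tilde H_s$, namely evenness in $y_2$ and radial symmetry in $y''$, annihilate all components except $\partial_{y_1}U$, so $\tilde v_\infty=c\,\partial_{y_1}U$. By the cyclic symmetry in $\tilde H_s$ the orthogonality condition defining $\tilde E_k$ reduces to $\int U_{x_1}^{p-1}Z_1 v_k=0$, which becomes $\int U^{p-1}\partial_{y_1}U\,\tilde v_k=0$ after the shift; passing to the limit forces $c\int U^{p-1}(\partial_{y_1}U)^2=0$, hence $c=0$. The cyclic symmetry also gives $\max_j\|v_k\|_{L^\infty(B_R(x_j))}\to 0$, and combined with $|b_k|=o(1)$ and the barrier estimate, $\|v_k\|_*\to 0$, the desired contradiction.

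The delicate point is ensuring the barrier estimate is uniform in $k$ as the number of bumps grows: the cross-bump sums $\sum_{i\ne j}e^{-\tau|x_i-x_j|}$ must stay bounded independently of $k$, and the maximum principle must be applied on the unbounded domain $\Omega_R$ with care at infinity. This rests on $r\in S_k$ with $r\sim k\ln k$, which yields $|x_i-x_j|\gtrsim (k\ln k)\sin(\pi/k)\sim \ln k$ for neighboring bumps (and larger for distant ones), making the relevant sums geometric. The same geometric fact ensures the off-diagonal entries $\int U_{x_i}^{p-1}Z_iZ_l$ are exponentially small, so the matrix governing $b_k$ is genuinely invertible with uniform-in-$k$ control.
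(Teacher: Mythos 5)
Your proof is correct and takes essentially the same route as the paper's: contradiction with $\|v_k\|_*=1$, a far-field estimate reducing $\|\cdot\|_*$ control to the local sup near the bumps plus the vanishing multiplier $b_k$, and a blow-up at a single bump combining the non-degeneracy of $U$ with the symmetries of $\tilde H_s$ and the orthogonality condition to force the local limit to vanish. The only technical variation is that you establish the far-field reduction by a comparison/barrier argument with $\phi_k=\sum_j e^{-\tau|y-x_j|}$, while the paper uses the Green's function representation $v=\int G(y,x)(\cdots)$ together with the exponential decay of $G$; these two devices are interchangeable and yield the same inequality.
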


\begin{proof}

We argue by contradiction.  Suppose that there are $k_m\to +\infty$,
$r_{k_m}\in  S_{k_m}$, and $(b_{k_m}, v_{k_m})$ solving \eqref{1-25-4}, such that
$\|f_{k_m}\|_*\to 0$,  $\|v_{k_m}\|_*=1$.  For simplicity of the notation, we drop the subscript $k_m$.

We have

\begin{equation}\label{2-l21}
v(x) =\int_{\mathbb R^N} G(y, x) \bigl(p W_r^{p-1} v +f + b  \sum_{j=1}^k  U_{x_j}^{p-1} Z_j\bigr),
\end{equation}
where $G(y, x)$ is the Green's function of $-\Delta + V(|y|)$ in $\mathbb R^N$.

It is easy to prove

\[
\bigl| \int_{\mathbb R^N} G(y, x) f\bigr|\le C\|f\|_* \sum_{j=1}^k  \int_{\mathbb R^N} G(y, x) e^{-\tau |y-x_j|}\,dy\le C
\|f\|_* \sum_{j=1}^k   e^{-\tau |x-x_j|},
\]
 and for $\tau>0$ small,

\[
\begin{split}
\bigl| \int_{\mathbb R^N} G(y, x) W_r^{p-1} v\bigr|\le & C \|v\|_*  \int_{\mathbb R^N} G(y, x) \bigl( \sum_{j=1}^k  e^{-|y-x_j|}\bigr)^{p-1}\bigl( \sum_{j=1}^k  e^{-\tau|y-x_j|}\bigr)\,dy\\
\le & C
\|v\|_* \sum_{j=1}^k   e^{- 2\tau |x-x_j|}.
\end{split}
\]
Moreover,

\[
\bigl| \int_{\mathbb R^N} G(y, x)  \sum_{j=1}^k  U_{x_j}^{p-1} Z_j
\bigr|\le C \sum_{j=1}^k   e^{- |x-x_j|}.
\]

On the other hand, from

\[
b\int_{\mathbb R^N} \sum_{j=1}^k  U_{x_j}^{p-1} Z_j Z_1  = \int_{\mathbb R^N} \bigl( L v - f\bigr) Z_1,
\]
we can prove that $b\to 0$.
So, it holds

\begin{equation}\label{100-27-2}
|v(x)|\bigl( \sum_{j=1}^k  e^{-\tau|y-x_j|}\bigr)^{-1} \le o(1) + C\|v\|_* \bigl(\sum_{j=1}^k e^{-\tau|y-x_j|}\bigr)^{-1}\sum_{j=1}^k   e^{- 2\tau |x-x_j|}.
\end{equation}

Since $v\in \tilde  E$, we can prove $\| v\|_{L^\infty(B_R(x_j))}\to 0$  for any $R>0$.  So we find from \eqref{100-27-2} that $\|v\|_*\to 0$. This is a contradiction.

\end{proof}

Now we want to  solve the following problem

\begin{equation}\label{10-25-4}
L \omega =  l_k  +  R(\omega)+ b_k \sum_{j=1}^k  U_{x_j}^{p-1} Z_j,
\end{equation}
for $\omega \in E_k$ and $b_k\in \mathbb R$, where

\[
l_k=\sum_{i=1}^k (V(|y|)-1) U_{x_i} +
\bigl( W_r^{p}-\sum_{i=1}^k U_{x_i}^p\bigr),
\]
and

\[
R(\omega)=(W_r+\omega)^p  -W_r^p  -p
W_p^{p-1} \omega.
\]

 We have

\begin{lemma}\label{100l1-25-3}
There is a small $\sigma>0$, such that

\[
\|l_k\|_* \le \frac{C }{r^\alpha}+ Ce^{- \min[\frac p2-\tau, 1]|x_2-x_1|},
\]
where $r=|x_i|$.

\end{lemma}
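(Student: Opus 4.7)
The plan is to split $l_k=l_k^{(1)}+l_k^{(2)}$ with
\[
l_k^{(1)}=\sum_{i=1}^k\bigl(V(|y|)-1\bigr)U_{x_i},\qquad l_k^{(2)}=W_r^p-\sum_{i=1}^k U_{x_i}^p,
\]
and to bound each piece separately in the weighted norm $\|\cdot\|_*$. The potential term $l_k^{(1)}$ will produce the $Cr^{-\alpha}$ contribution, while the nonlinear interaction $l_k^{(2)}$ yields the exponential term controlled by the distance between the two closest bumps.

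For $l_k^{(1)}$, I would use \eqref{V1} in the form $|V(|y|)-1|\le C(1+|y|)^{-\alpha}$. On the set $\{|y-x_i|\le r/2\}$ the triangle inequality forces $|y|\ge r/2$, so $|V(|y|)-1|\,U_{x_i}(y)\le Cr^{-\alpha}e^{-|y-x_i|}\le Cr^{-\alpha}e^{-\tau|y-x_i|}$. On the complementary set the exponential decay of $U$ and the boundedness of $V$ give $|V(|y|)-1|\,U_{x_i}(y)\le Ce^{-(1-\tau)r/2}e^{-\tau|y-x_i|}$, and since $r\in S_k$ grows like $k\ln k$, the prefactor $e^{-(1-\tau)r/2}$ is absorbed into $Cr^{-\alpha}$. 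Summing over $i$ and dividing by $\sum_j e^{-\tau|y-x_j|}$ yields $\|l_k^{(1)}\|_*\le Cr^{-\alpha}$.

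For $l_k^{(2)}$, the symmetry of $\tilde H_s$ reduces the estimate to the Voronoi cell $\Om_1=\{y:|y-x_1|\le|y-x_j|,\ \forall j\}$. Writing $W_r=U_{x_1}+\sum_{i\ne 1}U_{x_i}$ and using the elementary inequality $(a+b)^p-a^p\le C(a^{p-1}b+b^p)$ valid for $p>1$, together with the dominance $U_{x_1}\ge U_{x_i}$ in $\Om_1$, one gets
\[
|l_k^{(2)}(y)|\le C\,U_{x_1}^{p-1}(y)\sum_{i\ne 1}U_{x_i}(y)+C\sum_{i\ne 1}U_{x_i}^p(y).
\]

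The delicate step, and the only real bookkeeping, is the cross term $U_{x_1}^{p-1}U_{x_i}$. Bounding the weight from below by $e^{-\tau|y-x_1|}$ and setting $s=|y-x_1|$, $t=|y-x_i|$, the ratio $U_{x_1}^{p-1}U_{x_i}/\sum_j e^{-\tau|y-x_j|}$ is at most $Ce^{-(p-1-\tau)s-t}$, subject to the constraints $s+t\ge d:=|x_i-x_1|$ (triangle inequality) and $0\le s\le t$ (from $y\in\Om_1$). A direct optimization of $(p-1-\tau)s+t$ on this polygon shows the minimum is attained either at the vertex $(0,d)$ or at the midpoint $(d/2,d/2)$, with value $d\cdot\min\{1,(p-\tau)/2\}$. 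Hence $U_{x_1}^{p-1}(y)U_{x_i}(y)\le C\bigl(\sum_j e^{-\tau|y-x_j|}\bigr)e^{-\min(p/2-\tau,\,1)|x_i-x_1|}$. The second sum in the previous display is handled analogously (in $\Om_1$ one has $|y-x_i|\ge d/2$, giving $U_{x_i}^p\le Ce^{-(p-\tau)d/2}e^{-\tau|y-x_i|}$) and is in fact smaller. Summing over $i\ne 1$ and using $|x_i-x_1|\ge|x_2-x_1|$ together with the geometric decay induced by the circular arrangement of the centres yields $\|l_k^{(2)}\|_*\le Ce^{-\min(p/2-\tau,1)|x_2-x_1|}$, which combined with the bound on $l_k^{(1)}$ proves the lemma.
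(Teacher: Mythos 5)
Your proof is correct and follows the paper's basic blueprint: split $l_k$ into the potential part $l_k^{(1)}$ and the nonlinear interaction $l_k^{(2)}$, and by symmetry reduce the second estimate to the sector $\Omega_1$. The difference is in how you organize the interaction estimate. The paper performs a case split, using $|W_r^p-\sum U_{x_i}^p|\le C\sum_{i\ge2}U_{x_1}^{p-1}U_{x_i}$ when $p>2$ and $|W_r^p-\sum U_{x_i}^p|\le C\sum_{i\ge2}U_{x_1}^{p/2}U_{x_i}^{p/2}$ when $1<p\le2$, each with its own elementary triangle-inequality computation. You instead apply the single inequality $(a+b)^p-a^p\le C(a^{p-1}b+b^p)$ with $a=U_{x_1}$, $b=\sum_{i\ne 1}U_{x_i}$, and then run a small linear program in $(s,t)=(|y-x_1|,|y-x_i|)$ over the constraints $s+t\ge d$, $0\le s\le t$, which cleanly produces the exponent $d\min\{1,(p-\tau)/2\}$ in one stroke. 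That is a genuinely more unified bookkeeping and in fact yields a marginally sharper rate ($p/2-\tau/2$ rather than $p/2-\tau$), which is harmless since the lemma only asserts $\min(p/2-\tau,1)$. One small slip you should fix: from $(a+b)^p-a^p\le C(a^{p-1}b+b^p)$ and then subtracting $\sum_{i\ne1}U_{x_i}^p$ you actually get $|l_k^{(2)}|\le CU_{x_1}^{p-1}b+Cb^p+\sum_{i\ne1}U_{x_i}^p$, with $b^p=(\sum_{i\ne1}U_{x_i})^p$ rather than $\sum_{i\ne1}U_{x_i}^p$; to absorb $b^p$ into the cross term you need the geometric decay $\sum_{i\ne1}U_{x_i}\le CU_{x_1}$ in $\Omega_1$, which you only invoke at the very end and which is also implicit (unstated) in the paper's version.
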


\begin{proof}

 First, it holds

 \[
 \bigl| \sum_{i=1}^k (V(|y|)-1) U_{x_i}\bigr|\le  \frac{C}{1+  |y|^\alpha} \sum_{i=1}^k e^{-|y-x_i|}\le  \frac{C}{   r^\alpha} \sum_{i=1}^k e^{-\tau |y-x_i|}.
 \]

We define

\[
\Omega_j=\Bigl\{ y=(y',y'')\in\R^2\times \R^{N-2}:
 \Bigl\langle \frac {(y', 0)}{|y'|}, \frac{x_{k, j}}{|x_{k, j}|}\Bigr\rangle\ge \cos \frac{\pi}{k}\Bigr\}.
\]

 For any $y\in \Omega_1$,  we have $|y-x_j|\ge |y-x_1|$, $j=2,\cdots, k$.
Thus, for any $y\in \Omega_1$,  if $p>2$, it holds

\[
\begin{split}
\bigl| W^{p}-\sum_{i=1}^k U_{x_i}^p\bigr|\le & C \sum_{i=2}^k U_{x_1}^{p-1} U_{x_i}\le C e^{-(p-2) |y-x_1|}  \sum_{i=2}^k e^{- |x_j-x_1|}\\
\le & C e^{-\tau |y-x_1|}  \sum_{i=2}^k e^{- |x_j-x_1|},
\end{split}
\]
while if $p\in (1, 2]$,

\[
\begin{split}
&\bigl| W^{p}-\sum_{i=1}^k U_{x_i}^p\bigr| \le  C \sum_{i=2}^k U_{x_1}^{\frac p2} U_{x_i}^{\frac p2}\\
\le &  C e^{-\tau  |y-x_1|}  \sum_{i=2}^k e^{-(\frac p2-\tau)|x_j-x_1|}.
\end{split}
\]

\end{proof}

Using Lemmas~\ref{l21} and the contraction mapping theorem,  we can  prove that
there is an
integer $k_0>0$, such that for any integer  $k\ge k_0$, \eqref{10-25-4}
has a solution $\omega_k\in \tilde  E_k$. Moreover,

\begin{equation}\label{11-25-4}
\|\omega_k\|_*\le C \|l_k\|_* \le \frac{C }{r^\alpha}+C e^{- \min[\frac p2-\tau, 1]|x_2-x_1|}.
\end{equation}

Now we can calculate

\begin{equation}\label{12-25-4}
I\bigl( W_{r}+\omega_k\bigr)=  k A + \frac{ a B_1 k }{ r^\alpha}  - k \bigl(B_2+o(1)\bigr) U(|x_2-x_1|)+ k O\bigl( \frac{ 1 }{ r^{\alpha+\gamma}}+U^{1+\sigma}(|x_2-x_1|)\bigr),
\end{equation}
where $\sigma>0$ is a small constant. Noting that the function $\frac{ a B_1  }{ r^\alpha}  - B_2 U(|x_2-x_1|)$ has a maximum point in $S_k$. Therefore,
we can prove that \eqref{1.4} has a solution $u_k$
of the form
\[
u_k = W_{r_k}+\omega_k,
\]
where  $\omega_k\in \tilde E_k \cap  H^1(\R^N)$,   $r_k \in S_k$ and as $k\to +\infty$,

\begin{equation}\label{1-28-2}
\|\omega_k\|_* \le \frac{C }{r_k^\alpha}+C e^{- \min[\frac p2-\tau, 1]|x_2-x_1|}.
\end{equation}

\begin{remark}
In \cite{WY}, the reduction argument is carried out in $H^1(\mathbb R^N)\cap H_s$. Thus, we only obtain the estimate for
the error term $\omega_k$ in the norm of $H^1(\mathbb R^N)$.  Estimate \eqref{1-28-2} is a point-wise estimate, which is needed
when using the local Pohozaev identities.

\end{remark}

Next, assuming that $V$ satisfies \eqref{V2}, we derive a relation for $r_k$, which is needed in the proof of the non-degeneracy result.
 From

 \[
\int_{\mathbb R^N}  \bigl(  \nabla ( W_{r_k}+\omega_k )\nabla \frac{\partial U_{x_1}}{\partial y_1} + V(y) ( W_{r_k}+\omega_k )  \frac{\partial U_{x_1}}{\partial y_1}- ( W_{r_k}+\omega_k )^p
\frac{\partial U_{x_1}}{\partial y_1}\bigr)=0,
\]
we obtain

\[
\int_{\mathbb R^N}  \bigl( ( V(y) -1)( W_{r_k}+\omega_k )  \frac{\partial U_{x_1}}{\partial y_1}- \int_{\mathbb R^N}\Bigl( ( W_{r_k}+\omega_k )^p- \sum_{j=1}^k U^p_{x_j}\Bigr)
\frac{\partial U_{x_1}}{\partial y_1}\bigr)=0.
\]

It is easy to check that

\[
\begin{split}
&\int_{\mathbb R^N}  \bigl( ( V(y) -1)( W_{r_k}+\omega_k )  \frac{\partial U_{x_1}}{\partial y_1}\\
=&\int_{\mathbb R^N}   ( V(y) -1)U_{x_1}  \frac{\partial U_{x_1}}{\partial y_1}+ O\Bigl( \frac{1 }{r_k^{2\alpha}}+ e^{- \min[ p-2\tau, 2]|x_2-x_1|}\Bigr)\\
=&-\frac12 \int_{\mathbb R^N}   \frac{\partial  V(y) }{\partial y_1}U_{x_1}^2+ O\Bigl( \frac{1 }{r_k^{2\alpha}}+ e^{- \min[ p-2\tau, 2]|x_2-x_1|}\Bigr)\\
=&\frac{a (\alpha+1) x_1}{2r_k^{\alpha+2}}\int_{\mathbb R^N} U^2 + O\Bigl(\frac{1 }{r_k^{\alpha+2}}+ \frac{1 }{r_k^{2\alpha}}+ e^{- \min[ p-2\tau, 2]|x_2-x_1|}\Bigr).
\end{split}
\]
Moreover

\[
\begin{split}
&\int_{\mathbb R^N}\Bigl( ( W_{r_k}+\omega_k )^p- \sum_{j=1}^k U^p_{x_j}\Bigr)
\frac{\partial U_{x_1}}{\partial y_1}\bigr)\\
=&-p\int_{\mathbb R^N} U_{x_1}^{p-1} \sum_{j=2}^k U_{x_j}
\frac{\partial U_{x_1}}{\partial y_1}+O\Bigl(  \frac{1 }{r_k^{2\alpha}}+  e^{- \min(p-\tau, 2)|x_2-x_1|}\Bigr)\\
=&\int_{\mathbb R^N} U_{x_1}^{p} \sum_{j=2}^k
\frac{\partial U_{x_j}}{\partial y_1}+O\Bigl( \frac{1 }{r_k^{2\alpha}}+ e^{- \min(p-\tau, 2)|x_2-x_1|}\Bigr)\\
=& \bigl(B +o(1)\bigr)U(|x_2-x_1|)\Bigl( \frac{(x_1-x_2)_1}{|x_1-x_2|}+\frac{(x_1-x_k)_1}{|x_1-x_k|}\Bigr)+O\Bigl( \frac{1 }{r_k^{2\alpha}}+ e^{- \min(p-\tau, 2)|x_2-x_1|}\Bigr).
\end{split}
\]
Therefore, we have

\begin{equation}\label{1-18-4}
\begin{split}
&\frac{a (\alpha+1) x_1}{2r_k^{\alpha+2}}\int_{\mathbb R^N} U^2
\\
=&  \bigl(B +o(1)\bigr) U(|x_2-x_1|)\Bigl( \frac{(x_1-x_2)_1}{|x_1-x_2|}+\frac{(x_1-x_k)_1}{|x_1-x_k|}\Bigr)\\
&+O\Bigl( \frac{1 }{r_k^{2\alpha}}+\frac1{r_k^{\alpha+2}}+ e^{- \min(p-2\tau, 2)|x_2-x_1|}\Bigr).
\end{split}
\end{equation}

Note that

\[
(x_1-x_2)_1=(x_1-x_k)_1= r(1-\cos\frac{2\pi}k)= r\bigl( \frac{2\pi^2}{k^2}+ O\bigl(\frac1{k^4}\bigr)\bigr).
\]
 Thus,  \eqref{1-18-4} can be written as

 \begin{equation}\label{2-18-4}
\begin{split}
&\frac{a (\alpha+1) }{2r_k^{\alpha+1}}\int_{\mathbb R^N} U^2
\\
=& \bigl(B +o(1)\bigr) U(|x_2-x_1|)\frac1{k}+O\Bigl( \frac{1 }{r_k^{2\alpha}}+\frac1{r_k^{\alpha+2}}+ e^{- \min(p-2\tau, 2)|x_2-x_1|}\Bigr).
\end{split}
\end{equation}

Noting that   $e^{-|x_1-x_2|}\sim \frac{1 }{r_k^{\alpha}}$  and $\alpha>\frac4{p-1}$, it holds

\[
 e^{- \min(p-2\tau, 2)|x_2-x_1|}\le   \frac{C }{r_k^{\alpha\min(p-2\tau, 2)}}= o\bigl(\frac{1 }{r_k^{\alpha+1}}\bigr)
 \]

\section{The non-degeneracy of the solutions}

Let $u$ solve

 \begin{equation}\label{1-26-11}
  -\Delta u + V(|y|)u=u^p,
 \end{equation}
 and let $\xi$ solve

  \begin{equation}\label{2-26-11}
  -\Delta \xi +  V(|y|)\xi =p u^{p-1}\xi.
 \end{equation}

 \begin{lemma}\label{l0-14-12}
 We have

  \begin{equation}\label{1-20-12}
 \begin{split}
 & - \int_{\partial \Omega}\frac{\partial u}{\partial \nu} \frac{\partial \xi}{\partial y_i}- \int_{\partial \Omega}\frac{\partial \xi}{\partial \nu} \frac{\partial u}{\partial y_i}+
 \int_{\partial \Omega} \bigl\langle \nabla u, \nabla \xi\bigr\rangle \nu_i\\
 & +\int_{\partial \Omega}V(|y|) u \xi \nu_i  -\int_{\partial \Omega} u^p \xi \nu_i=\int_\Omega u\xi \frac{\partial V }{\partial y_i}
 \end{split}
 \end{equation}

 \end{lemma}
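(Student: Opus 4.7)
The identity is a standard local Pohozaev-type identity, obtained by cross-testing each equation with the $y_i$-derivative of the other and integrating by parts. I would organize the computation symmetrically so that the nonlinear and potential terms combine cleanly into total derivatives.

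First I would multiply \eqref{1-26-11} by $\partial_{y_i}\xi$ and integrate over $\Omega$. Integration by parts on $\int_\Omega(-\Delta u)\partial_{y_i}\xi$ yields
\[
-\int_{\partial\Omega}\frac{\partial u}{\partial\nu}\frac{\partial\xi}{\partial y_i}+\int_\Omega \nabla u\cdot\nabla(\partial_{y_i}\xi)+\int_\Omega V(|y|)\,u\,\partial_{y_i}\xi=\int_\Omega u^p\,\partial_{y_i}\xi.
\]
Symmetrically, multiplying \eqref{2-26-11} by $\partial_{y_i}u$ and integrating by parts gives
\[
-\int_{\partial\Omega}\frac{\partial\xi}{\partial\nu}\frac{\partial u}{\partial y_i}+\int_\Omega\nabla\xi\cdot\nabla(\partial_{y_i}u)+\int_\Omega V(|y|)\,\xi\,\partial_{y_i}u=p\int_\Omega u^{p-1}\xi\,\partial_{y_i}u.
\]

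Next I would add the two identities. The gradient terms combine as
\[
\nabla u\cdot\nabla(\partial_{y_i}\xi)+\nabla\xi\cdot\nabla(\partial_{y_i}u)=\partial_{y_i}\bigl(\langle\nabla u,\nabla\xi\rangle\bigr),
\]
whose integral over $\Omega$ equals $\int_{\partial\Omega}\langle\nabla u,\nabla\xi\rangle\nu_i$ by the divergence theorem. For the potential terms, $V(u\,\partial_{y_i}\xi+\xi\,\partial_{y_i}u)=\partial_{y_i}(Vu\xi)-u\xi\,\partial_{y_i}V$, so integration produces the boundary term $\int_{\partial\Omega}V u\xi\,\nu_i$ minus the bulk term $\int_\Omega u\xi\,\partial_{y_i}V$. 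Finally, on the right-hand side, $u^p\partial_{y_i}\xi+p\,u^{p-1}\xi\,\partial_{y_i}u=\partial_{y_i}(u^p\xi)$, whose integral equals $\int_{\partial\Omega}u^p\xi\,\nu_i$.

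Collecting everything and moving the $\partial_{y_i}V$ term to the right yields exactly \eqref{1-20-12}. There is no real obstacle here: the only care required is to pair the two equations so that the nonlinearities combine into a perfect derivative $\partial_{y_i}(u^p\xi)$ and the potential contributions produce the correct bulk term $\int_\Omega u\xi\,\partial_{y_i}V$. The identity holds for arbitrary smooth bounded $\Omega\subset\mathbb{R}^N$, which is exactly the flexibility needed for the local Pohozaev arguments used later in Section~3.
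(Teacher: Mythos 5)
Your proof is correct and follows essentially the same route as the paper: cross-testing \eqref{1-26-11} with $\partial_{y_i}\xi$ and \eqref{2-26-11} with $\partial_{y_i}u$, adding, and recognizing that the gradient, potential, and nonlinear pairings are all perfect $y_i$-derivatives that integrate to boundary terms (plus the single bulk term $\int_\Omega u\xi\,\partial_{y_i}V$).
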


 \begin{proof}

 We have

 \begin{equation}\label{3-26-11}
 \begin{split}
 & \int_\Omega\Bigl( -\Delta u \frac{\partial \xi}{\partial y_i} + (-\Delta \xi) \frac{\partial u }{\partial y_i}\Bigr)+
 \int_\Omega V(|y|)\Bigl(  u \frac{\partial \xi}{\partial y_i} +  \xi \frac{\partial u }{\partial y_i}\Bigr)
 \\
 =&\int_\Omega \Bigl(  u^p  \frac{\partial \xi}{\partial y_i} +  p u^{p-1} \xi \frac{\partial u }{\partial y_i}\Bigr).
 \end{split}
 \end{equation}

 It is easy to check that

  \begin{equation}\label{4-26-11}
 \int_\Omega \Bigl(  u^p  \frac{\partial \xi}{\partial y_i} +  p u^{p-1} \xi \frac{\partial u }{\partial y_i}\Bigr)
 = \int_\Omega  \frac{\partial ( u^p \xi) }{\partial y_i}=\int_{\partial \Omega} u^p \xi \nu_i,
 \end{equation}
and

\begin{equation}\label{5-26-11}
\begin{split}
 &
 \int_\Omega V(|y|)\Bigl(  u \frac{\partial \xi}{\partial y_i} +  \xi \frac{\partial u }{\partial y_i}\Bigr)= \int_\Omega V(|y|) \frac{\partial ( u \xi) }{\partial y_i}\\
 =& - \int_\Omega u\xi \frac{\partial V }{\partial y_i}+ \int_{\partial \Omega}V(|y|) u \xi \nu_i.
 \end{split}
 \end{equation}
Moreover,

\begin{equation}\label{6-26-11}
 \begin{split}
 & \int_\Omega\Bigl( -\Delta u \frac{\partial \xi}{\partial y_i} + (-\Delta \xi) \frac{\partial u }{\partial y_i}\Bigr)\\
 =& - \int_{\partial \Omega}\frac{\partial u}{\partial \nu} \frac{\partial \xi}{\partial y_i} + \int_\Omega \frac{\partial u}{\partial y_j} \frac{\partial^2 \xi}{\partial y_i\partial
 y_j}
 - \int_{\partial \Omega}\frac{\partial \xi}{\partial \nu} \frac{\partial u}{\partial y_i} + \int_\Omega \frac{\partial \xi}{\partial y_j} \frac{\partial^2 u}{\partial y_i\partial
 y_j}\\
 =& - \int_{\partial \Omega}\frac{\partial u}{\partial \nu} \frac{\partial \xi}{\partial y_i}- \int_{\partial \Omega}\frac{\partial \xi}{\partial \nu} \frac{\partial u}{\partial y_i}
 +\int_\Omega \frac{\partial }{\partial y_i} \Bigl(  \frac{\partial u}{\partial y_j} \frac{\partial \xi}{\partial y_j} \Bigr)\\
 =& - \int_{\partial \Omega}\frac{\partial u}{\partial \nu} \frac{\partial \xi}{\partial y_i}- \int_{\partial \Omega}\frac{\partial \xi}{\partial \nu} \frac{\partial u}{\partial y_i}+
 \int_{\partial \Omega} \bigl\langle \nabla u, \nabla \xi\bigr\rangle \nu_i.
 \end{split}
 \end{equation}
 So we have proved \eqref{1-20-12}.

\end{proof}

Let $u_k$ be a solution of

 \begin{equation}\label{1-27-11}
  -\Delta u + V(|y|)u=u^p,
 \end{equation}
constructed in Theorem~A.

We now  prove Theorem~\ref{th11}, arguing  by contradiction.   Suppose that there are $k_m\to +\infty$,  satisfying $\xi_m\in H_s$,
 $\|\xi_m\|_*=1$, and
\begin{equation}\label{10-13-12}
 L_{k_m}\xi_m =0.
 \end{equation}

Let
\begin{equation}\label{1-10-4}
\tilde \xi_{m} (y)= \xi_m
(y+ x_{k_m, 1}).
\end{equation}

\begin{lemma}\label{l1-10-4}

 It holds

\begin{equation}\label{10-9-4}
\tilde \xi_{m}  \to  b\frac{\partial U}{\partial y_1},
\end{equation}
uniformly in $C^1(B_R(0))$ for any $R>0$, where $b$ is  some constant.

\end{lemma}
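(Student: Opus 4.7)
The plan is to translate $\xi_m$ so it is centered at $x_{k_m,1}$, extract a locally convergent subsequence, identify the limit as a decaying solution of the linearized problem about $U$, and then use the symmetry built into $H_s$ to collapse the $N$-dimensional kernel to the single direction $\partial U/\partial y_1$.

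First I would write the equation for $\tilde\xi_m$ obtained from \eqref{10-13-12} by translation:
\[
-\Delta\tilde\xi_m + V(|y+x_{k_m,1}|)\,\tilde\xi_m = p\,u_{k_m}(y+x_{k_m,1})^{p-1}\,\tilde\xi_m.
\]
Since $r_{k_m}\to\infty$, $V(|y+x_{k_m,1}|)\to 1$ uniformly on compacta. Using the decomposition $u_{k_m}=W_{r_{k_m}}+\omega_{k_m}$, I would isolate the bump at $x_{k_m,1}$: $W_{r_{k_m}}(y+x_{k_m,1})=U(y)+\sum_{j\ge 2}U(y+x_{k_m,1}-x_{k_m,j})$. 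The second sum tends to $0$ uniformly on compacta because the separations $|x_{k_m,1}-x_{k_m,j}|\ge 2r_{k_m}\sin(\pi/k_m)$ diverge for $r_{k_m}\in S_{k_m}$ and $U$ decays exponentially, while the remainder $\omega_{k_m}(y+x_{k_m,1})$ vanishes locally by the pointwise estimate \eqref{1-28-2}. Hence $u_{k_m}(y+x_{k_m,1})\to U(y)$ in $C^0_{\text{loc}}$.

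Second, the normalization $\|\xi_m\|_*=1$ gives the uniform pointwise bound
\[
|\tilde\xi_m(y)|\le e^{-\tau|y|}+\sum_{j\ge 2}e^{-\tau|y+x_{k_m,1}-x_{k_m,j}|},
\]
so $\tilde\xi_m$ is uniformly bounded on every ball $B_R(0)$. Combined with the locally bounded coefficients established above, interior $W^{2,q}$ and Schauder estimates applied to the translated equation yield uniform $C^{1,\mu}_{\text{loc}}$ bounds, and by Arzel\`a--Ascoli a subsequence converges in $C^1_{\text{loc}}$ to some $\tilde\xi_\infty$ satisfying
\[
-\Delta\tilde\xi_\infty + \tilde\xi_\infty = pU^{p-1}\tilde\xi_\infty\quad\text{in }\R^N,
\]
with $|\tilde\xi_\infty(y)|\le e^{-\tau|y|}$, so $\tilde\xi_\infty\in H^1(\R^N)$. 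By the classical non-degeneracy of $U$, one has $\tilde\xi_\infty=\sum_{i=1}^N b_i\,\partial U/\partial y_i$ for some constants $b_1,\ldots,b_N$.

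To finish, I would invoke the symmetries encoded in $H_s$. Since $x_{k_m,1}=(r_{k_m},0,\ldots,0)$ has zero components in the coordinates $y_2,\ldots,y_N$, the evenness of $\xi_m$ in each of these variables transfers verbatim to $\tilde\xi_m$, and hence to $\tilde\xi_\infty$. Because $\partial U/\partial y_j=U'(|y|)\,y_j/|y|$ is odd in $y_j$ for every $j\ge 2$, the coefficients $b_2,\ldots,b_N$ must vanish, leaving $\tilde\xi_\infty = b\,\partial U/\partial y_1$ with $b:=b_1$, which is \eqref{10-9-4}. The main obstacle is the $C^0_{\text{loc}}$ convergence of $u_{k_m}(\cdot+x_{k_m,1})$ to $U$: this is the step that requires combining the exponential decay of $U$, the divergence of the inter-bump distances guaranteed by $r_{k_m}\in S_{k_m}$, and the pointwise remainder estimate \eqref{1-28-2}, which is precisely why the $\|\cdot\|_*$ framework of Section~2 was set up in the first place.
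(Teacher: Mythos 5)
Your argument is correct and follows the same route as the paper: extract a $C^1_{\rm loc}$ limit via elliptic estimates, identify it as a decaying solution of $-\Delta\xi+\xi=pU^{p-1}\xi$, classify by the non-degeneracy of $U$, and use evenness in $y_2,\dots,y_N$ (which survives translation by $x_{k_m,1}=(r_{k_m},0,\dots,0)$) to kill $b_2,\dots,b_N$. The only difference is that you fill in the details the paper compresses into the single sentence ``we may assume $\tilde\xi_m\to\xi$ in $C_{\rm loc}$,'' namely the local convergence $u_{k_m}(\cdot+x_{k_m,1})\to U$ and $V(|\cdot+x_{k_m,1}|)\to 1$ and the decay bound that places the limit in $H^1$.
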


\begin{proof}

  In view of
$|\tilde \xi_{m}|\le C$, we may assume that $\tilde \xi_{m}\to \xi$ in
$ C_{loc}(\mathbb R^N)$.   Then $\xi$ satisfies

\begin{equation}\label{3-8-4}
-\Delta \xi+ \xi  = p U^{p-1}\xi, \quad \text{in}\; \mathbb R^N.
\end{equation}
This gives

\begin{equation}\label{4-8-4}
\xi =\sum_{i=1}^{N} b_{i}\frac{\partial U}{\partial y_i}.
\end{equation}

Since $\xi$ is even in $y_j$ for $j=2,\cdots,N$, it holds $b_2=\cdots=b_N=0$.

\end{proof}

We decompose

\[
 \xi_{m} (y)= b_{m}  \sum_{j=1}^k \frac{\partial U_{x_j}}{\partial r}+ \xi_m^*,
\]
where
\[
\xi_m^*\in E_k=\bigl\{ v:  v\in  H_s\cap C(\mathbb R^N),\; \sum_{j=1}^k \int_{\R^N} U_{x_j}^{p-1} Z_j v=0, \;j=1,\cdots,k\bigr\}.
\]
 By Lemma~\ref{l1-10-4},  $b_m$ is bounded.

\begin{lemma}\label{l10-28-2}

 It holds

\[
\|\xi_m^*\|_*\le \frac{C}{ |x_1|^\alpha}+  C e^{- \min[\frac p2-\tau, 1]|x_2-x_1|}.
\]

\end{lemma}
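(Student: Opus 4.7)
The plan is to derive a linear equation for $\xi_m^*$ and then apply the a priori estimate of Lemma~\ref{l21} (adapted from $\tilde E_k$ to $E_k$). Since $\xi_m = b_m \sum_j Z_j + \xi_m^*$ and $L_{k_m}\xi_m = 0$, and since $L = L_{k_m} + p(u_k^{p-1} - W_{r_k}^{p-1})$, applying $L$ to both sides and rearranging gives
\[
L \xi_m^* = p\bigl(u_k^{p-1} - W_{r_k}^{p-1}\bigr) \xi_m - b_m \sum_{j=1}^k L Z_j.
\]
Using the identity $-\Delta Z_j + Z_j = p U_{x_j}^{p-1} Z_j$, each term in the second sum simplifies to
\[
L Z_j = (V(|y|) - 1) Z_j - p\bigl(W_{r_k}^{p-1} - U_{x_j}^{p-1}\bigr) Z_j.
\]

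Next I would bound each term in the $\|\cdot\|_*$-norm. For $(V(|y|) - 1)Z_j$, using $|Z_j(y)| \le C e^{-|y-x_j|}$ and splitting into the regions $|y - x_j| \le |x_j|/2$ (where $|V(|y|)-1| \le C r_k^{-\alpha}$) and $|y - x_j| > |x_j|/2$ (where the exponential absorbs the loss), yields $\|(V(|y|)-1)Z_j\|_* \le C r_k^{-\alpha}$. For $\bigl(W_{r_k}^{p-1} - U_{x_j}^{p-1}\bigr) Z_j$, the case analysis of Lemma~\ref{100l1-25-3} applies almost verbatim, producing the bound $C e^{-\min(\frac{p}{2} - \tau,\, 1)|x_2 - x_1|}$. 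For $\bigl(u_k^{p-1} - W_{r_k}^{p-1}\bigr)\xi_m$, substituting $u_k = W_{r_k} + \omega_k$, using the elementary inequality $|a^{p-1} - b^{p-1}| \le C |a-b|^{\min(p-1,1)} (a+b)^{\max(p-2,0)}$, combined with $\|\xi_m\|_* \le 1$ and the pointwise bound \eqref{1-28-2} on $\omega_k$, produces the same order of estimate. Since $|b_m|$ is bounded by Lemma~\ref{l1-10-4}, we conclude
\[
\|L\xi_m^*\|_* \le \frac{C}{r_k^\alpha} + C e^{-\min(\frac{p}{2} - \tau,\, 1)|x_2 - x_1|}.
\]

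Since $\xi_m^* \in E_k$, the qualitative estimate of Lemma~\ref{l21} upgrades to the quantitative form $\|\xi_m^*\|_* \le C \|L\xi_m^*\|_*$ by a standard contradiction argument, after introducing a Lagrange multiplier $c_m \sum_j U_{x_j}^{p-1} Z_j$ whose size is controlled by testing the equation against $Z_1$. The only point to verify is that the proof of Lemma~\ref{l21} carries over to $E_k$: the blow-up limit $\tilde\xi_m^*(y) := \xi_m^*(y + x_1)$ lies in the kernel of $-\Delta + 1 - p U^{p-1}$, hence equals $\sum_{i=1}^N b_i \partial U/\partial y_i$; evenness in $y_2,\ldots,y_N$ inherited from $H_s$ kills $b_i$ for $i \ge 2$, while the orthogonality $\sum_j \int U_{x_j}^{p-1} Z_j \xi_m^* = 0$ passes to the limit and kills $b_1$.

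The main obstacle will be the pointwise estimate of $\bigl(u_k^{p-1} - W_{r_k}^{p-1}\bigr)\xi_m$ in the subcritical range $p \in (1, 2]$, where $t \mapsto t^{p-1}$ is not Lipschitz; this is precisely where the exponent $\min(\frac{p}{2} - \tau, 1)$ resurfaces, matching the bound \eqref{1-28-2} for $\omega_k$. A secondary technicality is the careful bookkeeping of the Lagrange multiplier when transferring the a priori estimate of Lemma~\ref{l21} from $\tilde E_k$ to the slightly larger space $E_k$.
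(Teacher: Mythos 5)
Your proof follows the same strategy as the paper's: compute the residual that $\xi_m^*$ solves, estimate it term by term in $\|\cdot\|_*$, then invoke the coercivity of the linearized operator on the orthogonal complement $E_k$ via the contradiction/blow-up argument of Lemma~\ref{l21}. The only cosmetic difference is that you linearize around $W_{r_k}$ (operator $L$) rather than around $u_k$ (operator $L_{k_m}$) as the paper does, so you carry an extra term $p(u_k^{p-1}-W_{r_k}^{p-1})\xi_m$ that the paper folds into $(u_k^{p-1}-U_{x_j}^{p-1})Z_j$; the two decompositions are equivalent.

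Two small inaccuracies. First, the Lagrange multiplier you propose to introduce is not needed: $\xi_m^*$ already satisfies $L\xi_m^* = g_m$ outright (with $c_m=0$), and the a priori bound $\|\xi_m^*\|_*\le C\|L\xi_m^*\|_*$ for $\xi_m^*\in E_k$ follows directly from the contradiction argument of Lemma~\ref{l21} with the multiplier term absent; there is nothing to control by testing against $Z_1$. Second, for $p\in(1,2]$ the elementary inequality $|a^{p-1}-b^{p-1}|\le C|a-b|^{p-1}$ applied to $a=W_{r_k}+\omega_k$, $b=W_{r_k}$ produces a bound of order $\|\omega_k\|_*^{p-1}$, and since $(p-1)\bigl(\tfrac p2-\tau\bigr)<\tfrac p2-\tau$ when $p<2$, this is strictly weaker than the exponent $e^{-(\frac p2-\tau)|x_2-x_1|}$ asserted in the lemma. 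To recover the stated exponent one must instead use the mean-value form $|a^{p-1}-b^{p-1}|\le C|a-b|\,b^{p-2}$ on the region where $|\omega_k|\le\tfrac12 W_{r_k}$ (which covers $\cup_j B_{\frac12|x_2-x_1|}(x_j)$ for $k$ large), and treat the exponentially small exterior crudely; this is the finer estimate the paper is implicitly invoking when it says "similar to the proof of Lemma~\ref{100l1-25-3}". Your weaker exponent would still suffice for the subsequent Pohozaev computation, given $\alpha>\frac{4}{p-1}$, but it would not reproduce the lemma as stated.
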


\begin{proof}

It is easy to see that

\[
\begin{split}
 L_{k_m} \xi^*_m = & b_m  \bigl(  V(|y|) -1\bigr) \sum_{j=1}^k \frac{\partial U_{x_j}}{\partial r}\\
&-
 b_m  p  \sum_{j=1}^k \bigl(  u_{k_m}^{p-1}
  -U_{x_j}^{p-1}\bigr)  \frac{\partial U_{x_j}}{\partial r}.
\end{split}
\]

Similar to the proof of lemma~\ref{100l1-25-3}, using \eqref{1-28-2},
we can  prove

\[
\|\bigl(  V(|y|) -1\bigr) \sum_{j=1}^k \frac{\partial U_{x_j}}{\partial r} \|_*\le \frac{C}{ |x_1|^\alpha},
\]
and

\[
\| \sum_{j=1}^k \bigl(  u_{k_n}^{p-1}(y)
  -U_{x_j}^{p-1}\bigr)  \frac{\partial U_{x_j}}{\partial r}
\|_*\le C e^{- \min[\frac p2-\tau, 1]|x_2-x_1|}.
\]

Moreover, from  $\xi_m^*\in E_k$, similar to Lemma~\ref{l21}, we can prove that there exists $\rho>0$, such that

\[
\| \tilde L_k \xi^*_n\|_*\ge  \rho \|  \xi^*_n\|_*.
\]
Thus, the result follows.

\end{proof}

\begin{lemma}\label{l1-14-12}

 It holds

\begin{equation}\label{1-1-4-12}
\tilde \xi_{m}\to 0,
\end{equation}
uniformly in $C^1(B_R(0))$ for any $R>0$.

\end{lemma}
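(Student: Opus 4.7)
By Lemma~\ref{l1-10-4} we have $\tilde\xi_m\to b\,\partial U/\partial y_1$ in $C^1$ on compacts for some $b\in\mathbb R$; it suffices to prove $b=0$.

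The plan is to apply the local Pohozaev identity \eqref{1-20-12} of Lemma~\ref{l0-14-12} with $u=u_{k_m}$, $\xi=\xi_m$ and $i=1$, on the ball $\Omega_m=B_{d_m}(x_{k_m,1})$ where $d_m=\tfrac12|x_{k_m,2}-x_{k_m,1}|$. The guiding heuristic is that the same computation that produced the balance relation \eqref{1-18-4}--\eqref{2-18-4} determining $r_{k_m}$ (obtained by testing the equation for $u_{k_m}$ against $\partial U_{x_1}/\partial y_1$), when reproduced with $\xi_m$ in place of $\partial U_{x_1}/\partial y_1$ and using $\xi_m\to b\,\partial_{y_1}U$ near $x_{k_m,1}$, should yield the same balance multiplied by $b$ up to strictly smaller errors, thereby forcing $b=0$ when combined with \eqref{2-18-4}.

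Concretely, for the right-hand side of \eqref{1-20-12}, the change of variable $y=z+x_{k_m,1}$ and Taylor expansion of $\partial_{y_1}V$ around $z=0$, combined with the pointwise bounds on $\omega_{k_m}$ from \eqref{1-28-2} and on $\xi_m^*$ from Lemma~\ref{l10-28-2}, give
\[
\int_{\Omega_m}u_{k_m}\xi_m\,\frac{\partial V}{\partial y_1}=-\tfrac{b}{2}\,V''(r_{k_m})\int_{\R^N}U^2+o\bigl(r_{k_m}^{-\alpha-2}\bigr),
\]
where the zeroth-order term of the Taylor expansion vanishes because $\int U(z)\,\partial_{z_1}U(z)\,dz=0$ by oddness; this quantity is of order $r_{k_m}^{-\alpha-2}$ by \eqref{V1}. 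For the left-hand side, writing $u_{k_m}=W_{r_{k_m}}+\omega_{k_m}$ and $\xi_m=b_m\sum_j\partial_rU_{x_j}+\xi_m^*$ with $b_m\to b$, and noting that on $\partial\Omega_m$ only the three bumps $U_{x_1},U_{x_2},U_{x_k}$ contribute at leading order, a careful computation of the boundary integrals yields
\[
\text{LHS of }\eqref{1-20-12}=b\,\bigl(c+o(1)\bigr)\,U(|x_{k_m,1}-x_{k_m,2}|)\cdot\tfrac{2\pi}{k_m}+(\text{lower}),
\]
for some positive constant $c$; this is of order $r_{k_m}^{-\alpha-1}$ by \eqref{2-18-4}.

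Combining the two computations, \eqref{1-20-12} equates a $b$-multiple of scale $r_{k_m}^{-\alpha-1}$ with a $b$-multiple of scale $r_{k_m}^{-\alpha-2}$; dividing through by $U(|x_{k_m,1}-x_{k_m,2}|)/k_m$ gives $bc+o(1)=o(1)$, hence $b=0$. The main technical obstacle is the boundary analysis: one must verify that the replacement of $u_{k_m}$ and $\xi_m$ by their main profiles on $\partial\Omega_m$ generates errors strictly smaller than $U(|x_{k_m,1}-x_{k_m,2}|)/k_m$; this uses the pointwise estimates \eqref{1-28-2} and Lemma~\ref{l10-28-2} together with the exponential decay of all bumps beyond the three nearest on $\partial\Omega_m$.
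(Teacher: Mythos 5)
Your overall strategy mirrors the paper's exactly: apply the local Pohozaev identity of Lemma~\ref{l0-14-12} with $i=1$ on $\Omega_m=B_{\frac12|x_2-x_1|}(x_1)$, compute the right-hand side $\int_{\Omega_m}u\xi\,\partial_{y_1}V$ via the expansion \eqref{V2}, estimate the boundary terms using the profile decompositions $u_{k_m}=W_{r_{k_m}}+\omega_{k_m}$ and $\xi_m=b_m\sum_j\partial_r U_{x_j}+\xi_m^*$ together with \eqref{1-28-2} and Lemma~\ref{l10-28-2}, and finally compare orders using the relation \eqref{2-18-4}. Your right-hand side computation is correct: after the oddness cancellation $\int U\,\partial_{y_1}U=0$ kills the zeroth Taylor term, the leading contribution is indeed $\sim b\,V''(r_{k_m})\int U^2\sim r_{k_m}^{-\alpha-2}$, which matches the paper's \eqref{11-21-12}.

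However, there is a genuine gap on the left-hand side. You assert that the boundary terms equal $b(c+o(1))\,U(|x_1-x_2|)\cdot\frac{2\pi}{k_m}$, i.e.\ of order $r_{k_m}^{-\alpha-1}$, but the paper's computation \eqref{10-27-4}--\eqref{60-27-4} gives the strictly smaller order $\frac{U(|x_1-x_2|)}{k^2}\sim\frac{\ln k}{r_{k_m}^{\alpha+2}}$. The extra factor of $\frac1k$ is the technical heart of the lemma and comes from a two-fold cancellation that your sketch does not capture: first, since $\partial_r U_{x_1}=-\partial_{y_1}U_{x_1}$, all cross-terms rearrange into $-p\int_\Omega U_{x_1}^{p-1}\partial_{y_1}U_{x_1}\bigl(\partial_rU_{x_2}+\partial_{y_1}U_{x_2}+\partial_rU_{x_k}+\partial_{y_1}U_{x_k}\bigr)$; second, $\partial_rU_{x_j}+\partial_{y_1}U_{x_j}=U'(|y-x_j|)\langle\frac{y-x_j}{|y-x_j|},\,e_1-\hat x_j\rangle$ with $|e_1-\hat x_j|\sim\frac{2\pi}k$ for $j=2,k$ (because for $k$ large the unit vector $\hat x_j$ is close to $e_1$), and a further factor of $\frac1k$ enters through the oddness $\int U_{x_1}^{p-1}\partial_{y_1}U_{x_1}=0$ and the near-orthogonality of $x_1-x_2$ to $e_1$. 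Without both cancellations a naive bound for an individual cross-term like $\int U_{x_1}^{p-1}\partial_{y_1}U_{x_1}\partial_{y_1}U_{x_2}$ is only $O\bigl(U/\ln k\bigr)$, which is far larger than what you wrote. So the asserted intermediate identity is not merely unjustified but quantitatively incorrect, and a ``careful computation'' of the kind you invoke would not produce it. The conclusion $b=0$ does survive because even the correct magnitude $\frac{U}{k^2}\sim\frac{\ln k}{r_{k_m}^{\alpha+2}}$ dominates the right-hand side $\sim\frac1{r_{k_m}^{\alpha+2}}$, but only by the thin margin $\ln k$, not by the factor $r_{k_m}$ that your estimate would suggest; this margin is also what makes the error-term bookkeeping (the analogues of \eqref{70-27-4}--\eqref{2-28-4}, which rely on $\alpha>\max(\frac4{p-1},2)$) non-trivial and necessary, and your proposal leaves that analysis entirely to the reader.
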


\begin{proof}

We apply the identities in Lemma~\ref{l0-14-12}  in the domain  $\Omega=B_{\frac12 |x_2-x_1|}(x_1)$.  For simplicity, we drop the subscript $m$.

We define the following bilinear form

  \begin{equation}\label{1-20-3}
 \begin{split}
L(u, \xi, \Omega)= & - \int_{\partial \Omega}\frac{\partial u}{\partial \nu} \frac{\partial \xi}{\partial y_1}- \int_{\partial \Omega}\frac{\partial \xi}{\partial \nu} \frac{\partial u}{\partial y_1}+
 \int_{\partial \Omega} \bigl\langle \nabla u, \nabla \xi\bigr\rangle \nu_1\\
 & +\int_{\partial \Omega} u \xi \nu_1,
 \end{split}
 \end{equation}
 where $\nu$ is the outward unit normal of $\partial\Omega$.
Then it follows Lemma~\ref{l0-14-12} that

  \begin{equation}\label{2-20-3}
L(u, \xi,  \Omega)  +\int_{\partial \Omega} \bigl( V(|y|)-1\bigr)u \xi \nu_1-\int_{\partial \Omega} u^p \xi \nu_1=\int_\Omega u\xi \frac{\partial V }{\partial y_1}.
 \end{equation}

Let recall that

 \begin{equation}\label{3-20-3}
 \begin{split}
L(u, \xi,  \Omega) = &\int_\Omega \bigl( -\Delta u +   u - u^p\bigr) \frac{\partial \xi}{\partial y_1}\\
& +
 \int_\Omega \bigl( -\Delta \xi  +  \xi -p u^{p-1} \xi \bigr)\frac{\partial u }{\partial y_1}+\int_{\partial \Omega} u^p \xi \nu_1.
 \end{split}
 \end{equation}

Note that

\[
u_{k_m}=\sum_{j=1}^k  U_{x_j}(y)+\omega_{k_m},
\]
and

\[
 \xi_{m} (y)= b_m\sum_{j=1}^k  \frac{\partial U_{x_j}}{\partial r}+ \xi_m^*,
\]

We have

 \begin{equation}\label{3-27-4}
 \begin{split}
 &\frac{\partial U_{x_j}}{\partial r}= \frac{\partial U\bigl(y- ( r\cos\frac{2(j-1)\pi}k, r\sin\frac{2(j-1)\pi}k, 0)\bigr)}{\partial r}\\
 =&  -U'(|y- x_j|) \bigl\langle \frac{y-x_j}{|y-x_j|}, ( \cos\frac{2(j-1)\pi}k, \sin\frac{2(j-1)\pi}k, 0)\bigr\rangle,
 \end{split}
 \end{equation}
and

 \[
 -\Delta \frac{\partial U_{x_j}}{\partial r}  +  \frac{\partial U_{x_j}}{\partial r} -p U_{x_j}^{p-1}\frac{\partial U_{x_j}}{\partial r}=0.
 \]

We now calculate

 \begin{equation}\label{4-20-3}
 \begin{split}
& L(U_{x_j} ,  \frac{\partial U_{x_j}}{\partial r},  \Omega)
\\
= &\int_\Omega \bigl( -\Delta U_{x_j} +   U_{x_j} - U_{x_j}^p\bigr) \frac{\partial \frac{\partial U_{x_j}}{\partial r}}{\partial y_1}\\
& +
 \int_\Omega \bigl( -\Delta \frac{\partial U_{x_j}}{\partial r}  +  \frac{\partial U_{x_j}}{\partial r} -p U_{x_j}^{p-1}\frac{\partial U_{x_j}}{\partial r}\bigr)\frac{\partial U_{x_1} }{\partial y_1}+\int_{\partial \Omega} U_{x_j}^p \frac{\partial U_{x_j}}{\partial r} \nu_1\\
 =&\int_{\partial \Omega} U_{x_j}^p \frac{\partial U_{x_j}}{\partial r} \nu_1= O\bigl( e^{-(\frac {p+1}2 -\sigma)|x_2-x_1|}\bigr),
 \end{split}
 \end{equation}
where $\sigma>0$ is any fixed small constant.

On the other hand, for $i\ne j$, we have

  \begin{equation}\label{5-20-3}
 \begin{split}
& L(U_{x_i} ,  \frac{\partial U_{x_j}}{\partial r},  \Omega)
\\
= &
 \int_\Omega \bigl( -\Delta \frac{\partial U_{x_j}}{\partial r}  +  \frac{\partial U_{x_j}}{\partial r} -p U_{x_i}^{p-1}\frac{\partial U_{x_j}}{\partial r}\bigr)\frac{\partial U_{x_i} }{\partial y_1}
 +   O\bigl( e^{-(\frac {p+1}2 -\sigma)|x_2-x_1|}\bigr)\\
 =&p \int_\Omega U_{x_j}^{p-1}\frac{\partial U_{x_j}}{\partial r}\frac{\partial U_{x_i} }{\partial y_1}  -p \int_\Omega U_{x_i}^{p-1}\frac{\partial U_{x_j}}{\partial r}\frac{\partial U_{x_i} }{\partial y_1}+O\bigl( e^{-(\frac {p+1}2 -\sigma) |x_2-x_1|}\bigr).
 \end{split}
 \end{equation}

 It is easy to check  that for $j\ge 2$,  $i\ne j$,

 \begin{equation}\label{1-27-4}
  \int_\Omega U_{x_j}^{p-1}\frac{\partial U_{x_j}}{\partial r}\frac{\partial U_{x_i} }{\partial y_1}= O\Bigl( e^{-(1+ \frac{p-1}2 -\sigma )|x_1-x_2|}\Bigr),
 \end{equation}
and

 \begin{equation}\label{2-27-4}
  \int_\Omega U_{x_j}^{p-1}\frac{\partial U_{x_i}}{\partial r}\frac{\partial U_{x_j} }{\partial y_1}= O\Bigl( e^{-(1+ \frac{p-1}2 -\sigma )|x_1-x_2|}\Bigr),
 \end{equation}
 where $\sigma>0$ is any fixed small constant.

 Combining \eqref{4-20-3}--\eqref{2-27-4}, we obtain

  \begin{equation}\label{10-27-4}
 \begin{split}
& L(\sum_{j=1}^k U_{x_j} ,  \sum_{i=1}^k \frac{\partial U_{x_i}}{\partial r},  \Omega)
\\
 =&p \int_\Omega U_{x_1}^{p-1}\frac{\partial U_{x_1}}{\partial r}\Bigl( \frac{\partial U_{x_2} }{\partial y_1} +\frac{\partial U_{x_k} }{\partial y_1}\Bigr)
  -p \int_\Omega U_{x_1}^{p-1}\Bigl(\frac{\partial U_{x_2}}{\partial r}+  \frac{\partial U_{x_k}}{\partial r} \Bigr)
  \frac{\partial U_{x_1} }{\partial y_1}\\
  &+O\bigl( e^{-(\frac {p+1}2 -\sigma)|x_2-x_1|}\bigr)\\
 =& -p \int_\Omega U_{x_1}^{p-1}\frac{\partial U_{x_1}}{\partial y_1}\Bigl(\frac{\partial U_{x_2} }{\partial y_1}+\frac{\partial U_{x_2}}{\partial r}
 +\frac{\partial U_{x_k} }{\partial y_1}+\frac{\partial U_{x_k}}{\partial r}\Bigr)
 +O\bigl( e^{-(\frac {p+1}2 -\sigma)|x_2-x_1|}\bigr)\\
 =&  \int_{\mathbb R^N} U_{x_1}^{p}\frac{\partial }{\partial y_1}\Bigl(\frac{\partial U_{x_2} }{\partial y_1}+\frac{\partial U_{x_2}}{\partial r}
 +\frac{\partial U_{x_k} }{\partial y_1}+\frac{\partial U_{x_k}}{\partial r}\Bigr)
 +O\bigl( e^{-(\frac {p+1}2 -\sigma)|x_2-x_1|}\bigr)\\
 = &\bigl( B+o(1)\bigr) \frac{\partial }{\partial y_1}\Bigl(\frac{\partial U_{x_2} }{\partial y_1}+\frac{\partial U_{x_2}}{\partial r}
 +\frac{\partial U_{x_k} }{\partial y_1}+\frac{\partial U_{x_k}}{\partial r}\Bigr)\Bigr|_{ y= x_1}
 +O\bigl( e^{-(\frac {p+1}2 -\sigma)|x_2-x_1|}\bigr),
 \end{split}
 \end{equation}
where $B>0$ is a constant.

  By \eqref{3-27-4}, we have

  \[
 \begin{split}
& \frac{\partial U_{x_2} }{\partial y_1}+\frac{\partial U_{x_2}}{\partial r}
 \\
 = &  U'(|y- x_2|) \bigl\langle \frac{y-x_2}{|y-x_2|}, (1, 0,0)-( \cos\frac{2\pi}k, \sin\frac{2\pi}k, 0)\bigr\rangle.
 \end{split}
 \]
Therefore,

 \[
 \begin{split}
& \frac{\partial }{\partial y_1}\Bigl( \frac{\partial U_{x_2} }{\partial y_1}+\frac{\partial U_{x_2}}{\partial r}\Bigr)
 \\
 = &  U''(|y- x_2|)\frac{(y-x_2)_1}{|y-x_2|} \bigl\langle \frac{y-x_2}{|y-x_2|}, (1, 0,0)-( \cos\frac{2\pi}k, \sin\frac{2\pi}k, 0)\bigr\rangle\\
 &+U'(|y- x_2|) \bigl\langle \frac{(1,0,0)}{|y-x_2|}- (y-x_2)\frac{(y-x_2)_1}{|y-x_2|^3}, (1, 0,0)-( \cos\frac{2\pi}k, \sin\frac{2\pi}k, 0)\bigr\rangle.
 \end{split}
 \]
We have

\[
 (1, 0,0)-( \cos\frac{2\pi}k, \sin\frac{2\pi}k, 0)= \bigl( \frac{2\pi^2}{k^2}+O(\frac1{k^4}), \frac{2\pi}k+O(\frac1{k^2}), 0\bigr).
 \]

Note that

\[
|x_2-x_1|=2r_k\sin\frac{\pi}k= \frac {2\pi r_k}k+O\bigl( \frac{ r_k}{k^3}\bigr),
\]

\[
(x_1-x_2)_1= r_k \bigl(1- \cos\frac{2\pi}k\bigr)= \frac{2\pi^2 r_k}{k^2}+O\bigl( \frac{ r_k}{k^4}\bigr),
\]
and

\[
(x_1-x_2)_2= -r_k \sin \frac{2\pi}k =- \frac {2\pi r_k}{k}+O\bigl( \frac{r_k}{k^3}\bigr).
\]
So we obtain

 \[
 \begin{split}
& \frac{\partial }{\partial y_1}\Bigl( \frac{\partial U_{x_2} }{\partial y_1}+\frac{\partial U_{x_2}}{\partial r}\Bigr)\Bigr|_{y=x_1}
 \\
 = &  U''(|x_1- x_2|)\frac{(x_1-x_2)_1}{|x_1-x_2|}  \frac{(x_1-x_2)_2}{|x_1-x_2|}\bigl(- \sin\frac{2\pi}k\bigr)\\
 & +U'(|x_1- x_2|) \Bigl( \frac{1}{|x_1-x_2|}  \bigl( 1-\cos\frac{2\pi}k\bigr)  + \frac{(x_1-x_2)_2(x_1-x_2)_1}{|x_1-x_2|^3} \sin\frac{2\pi}k\Bigr)+O\bigl(\frac1{k^3}\bigr)\\
 =&  U(|x_1- x_2|)\frac{(x_1-x_2)_1}{|x_1-x_2|}\Bigl(\sin\frac{2\pi}k- \frac1{r_k}+ \frac{1}{|x_1-x_2|} \sin\frac{2\pi}k
 \Bigr)\Bigl( 1+o(1)\Bigr)\\
 =& \frac{ 2\pi^2 U(|x_1- x_2|)}{k^2}\Bigl( 1+o(1)\Bigr).
 \end{split}
 \]
Similarly

 \[
 \begin{split}
& \frac{\partial }{\partial y_1}\Bigl( \frac{\partial U_{x_k} }{\partial y_1}+\frac{\partial U_{x_k}}{\partial r}\Bigr)\Bigr|_{y=x_1}
 \\
 = &  U''(|x_1- x_k|)\frac{(x_1-x_k)_1}{|x_1-x_k|}  \frac{(x_1-x_k)_2}{|x_1-x_k|}\bigl(- \sin\frac{2(k-1)\pi}k\bigr)\\
 & +U'(|x_1- x_k|) \Bigl( \frac{1}{|x_1-x_k|}  \bigl( 1-\cos\frac{2(k-1)\pi}k\bigr)  + \frac{(x_1-x_k)_2(x_1-x_k)_1}{|x_1-x_k|^3} \sin\frac{2(k-1)\pi}k\Bigr)\\
 &+O\bigl(\frac1{k^3}\bigr)\\
 =& \frac{ 2\pi^2 U(|x_1- x_k|)}{k^2}\Bigl( 1+o(1)\Bigr).
 \end{split}
 \]
So we have proved

  \begin{equation}\label{60-27-4}
 \begin{split}
& L(\sum_{j=1}^k U_{x_j} ,  \sum_{i=1}^k \frac{\partial U_{x_i}}{\partial r},  \Omega)
\\
 = &\bigl( B'+o(1)\bigr)  \frac{  U(|x_1- x_k|)}{k^2}
 +O\bigl( e^{-(\frac {p+1}2 -\sigma)|x_2-x_1|}\bigr).
 \end{split}
 \end{equation}

 Using \eqref{1-28-2}  and Lemma~\ref{l10-28-2}, together with $L^p$--estimates for the elliptic equations,
 we can prove

\[
|\nabla \omega_{k_m}(y)|,\;\; |\nabla \xi_m^*(y)|\le \frac{C}{ |x_1|^\alpha}+  C e^{- \min[\frac p2-\tau, 1]|x_2-x_1|},\quad \forall y\in\partial\Omega,
\]
from which, we see

\begin{equation}\label{70-27-4}
L(\omega_{k_m},  \sum_{i=1}^k \frac{\partial U_{x_i}}{\partial r},  \Omega)= O\bigl( \frac{1}{ |x_1|^\alpha}+  C e^{- \min[\frac p2-\tau, 1]|x_2-x_1|}  \bigr)e^{-\frac{1-\sigma}2
|x_1-x_2|}
 \end{equation}
 and

\begin{equation}\label{71-27-4}
L(\sum_{j=1}^k U_{x_j} ,  \xi^*_m,  \Omega)= O\bigl( \frac{1}{ |x_1|^\alpha}+  C e^{- \min[\frac p2-\tau, 1]|x_2-x_1|}  \bigr)e^{-\frac{1-\sigma}2
|x_1-x_2|}
 \end{equation}

It is easy to check that

  \begin{equation}\label{1-28-4}
 \int_{\partial \Omega} \bigl( V(|y|)-1\bigr)u \xi \nu_1=O\bigl(\frac1{r_k^\alpha} e^{-(1-\sigma)|x_1-x_2|}\bigr),
 \end{equation}
 and

  \begin{equation}\label{2-28-4}
 \int_{\partial \Omega} u^p \xi \nu_1=O\bigl(e^{-(\frac{p+1}2-\sigma)|x_1-x_2|}\bigr).
 \end{equation}

 Combining \eqref{2-20-3}, and \eqref{60-27-4}--\eqref{2-28-4}, we obtain

  \begin{equation}\label{3-28-4}
 \begin{split}
b_m\bigl( B'+o(1)\bigr)  \frac{  U(|x_1- x_k|)}{k^2}
 +O\bigl( \frac1{r_k^\alpha} e^{-(1-\sigma)|x_1-x_2|}+e^{-(\frac {p+1}2 -\sigma)|x_2-x_1|}\bigr)=\int_\Omega u\xi \frac{\partial V }{\partial y_1}.
 \end{split}
 \end{equation}

 Using \eqref{1-28-2}  and  Lemma~\ref{l10-28-2}, we find

  \begin{equation}\label{1000-21-12}
   \begin{split}
 &\int_{ \Omega} u\xi V'(|y|) |y|^{-1} y_1
 \\
 =&b_m\int_{ \mathbb R^N} U_{x_1}   \frac{\partial U_{x_1}}{\partial y_1} V'(|y|) |y|^{-1} y_1  +\frac1{|x_1|^{\alpha+1}} O\Bigl(\frac{1}{ |x_1|^\alpha}+   e^{- \min[(\frac p2-\sigma), 1]|x_2-x_1|}\Bigr).
 \end{split}
 \end{equation}

 For any $h>1$, it holds

 \[
 \frac{ y_1+ x_{1, 1}} {|y+x_1|^h} = \frac{  x_{1, 1}} {|x_1|^h}- (h-1) \frac{ y_1} {|x_1|^h}+O\bigl(\frac1{|x_1|^{h+1}}\bigr).
 \]
Thus

\[
\int_{\mathbb R^N}U  \frac{\partial U}{\partial y_1}  \frac{ y_1+ x_{1, 1}} {|y+x_1|^h}=- (h-1)\int_{\mathbb R^N}U  \frac{\partial U}{\partial y_1} \frac{ y_1} {|x_1|^h}
+O\bigl(\frac1{|x_1|^{h+1}}\bigr).
\]
 Therefore, by \eqref{V2},

 \begin{equation}\label{11-21-12}
 \begin{split}
  &\int_{ \mathbb R^N} U_{x_1}   \frac{\partial U_{x_1}}{\partial y_1} V'(|y|) |y|^{-1} y_1\\
   = & \int_{ \mathbb R^N+ x_1 } U  \frac{\partial U}{\partial y_1}V'(|y+ x_1|) |y+ x_1|^{-1} (y_1+  x_{1,1})\\
    = & \int_{ \mathbb R^N } U  \frac{\partial U}{\partial y_1} \Bigl( -\frac{ \alpha a_1 }{ |y+ x_1|^{\alpha+1}} -\frac{ (\alpha+1) a_2 } {|y+ x_1|^{\alpha+2}} +
    O\bigl( \frac{ 1 }{ |y+ x_1|^{m+3}}\bigr)\Bigr)\frac{ y_1+  x_{1,1}}{|y+ x_1| }
  \\
  =&\frac {\alpha(\alpha+1)a_1}{|r_{k_n}|^{\alpha+2}} \Bigl( \int_{\mathbb R^N}  U U'(|y|)  y_1^2  +o(1)\Bigr).
  \end{split}
 \end{equation}

 Combining \eqref{3-28-4} and \eqref{11-21-12}, we obtain

 \begin{equation}\label{1000-21-12}
 \begin{split}
  & b_m \frac {\alpha(\alpha+1)a_1}{|r_{k_n}|^{\alpha+2}} \Bigl( \int_{\mathbb R^N}  U U'(|y|)  y_1^2  +o(1)\Bigr)
  +\frac1{|x_1|^{\alpha+1}} O\Bigl(\frac{1}{ |x_1|^\alpha}+   e^{- \min[(\frac p2-\sigma), 1]|x_2-x_1|}\Bigr)
  \\
  =& b_m \bigl( B'+o(1)\bigr)  \frac{  U(|x_1- x_k|)}{k^2}
 +O\bigl( \frac1{r_k^\alpha} e^{-(1-\sigma)|x_1-x_2|}+e^{-(\frac {p+1}2 -\sigma)|x_2-x_1|}\bigr).
  \end{split}
 \end{equation}

 Since $\alpha>\max\bigl(\frac 4{p-1}, 2\bigr)$, we see that

 \[
 \frac1{r_k^\alpha} e^{-(1-\sigma)|x_1-x_2|}+e^{-(\frac {p+1}2 -\sigma)|x_2-x_1|}=o\Bigl(  \frac{  U(|x_1- x_k|)}{k^2}  \Bigr).
 \]

 By \eqref{2-18-4},

  \begin{equation}\label{10-28-4}
\frac{1 }{r_k^{\alpha+1}}
\sim U(|x_2-x_1|)\frac1{k}.
\end{equation}
Hence

\[
\frac{  U(|x_1- x_k|)}{k^2}\sim \frac1k \frac{1 }{r_k^{\alpha+1}}=\frac{r_k}k \frac{1 }{r_k^{\alpha+2}}>>\frac{1 }{r_k^{\alpha+2}},
\]
since $r_k\sim k\ln k$.  Thus, \eqref{1000-21-12} gives $b_m\to 0$.

\end{proof}

\begin{proof} [Proof of Theorem~\ref{th11}]

Let  $G(y, x)$ be the Green's function of $-\Delta u + V(|y|) u$  in $\mathbb R^N$.  Then

 \[
 0< G(z, y) \le  C e^{-|z-y|}
 \]
 as $|z-y|\to +\infty$.  We have

\begin{equation}\label{2-28-11}
\xi_k (y)= p \int_{\mathbb R^N}  G(z, y) u_k^{p-1}(z) \xi_k(z)\,dz.
\end{equation}

Now we estimate

\[
\begin{split}
&|\int_{\mathbb R^N}  G(z, y) u_k^{p-1}(z) \xi_k(z)\,dz|\le  C\|\xi _k\|_{*} \int_{\mathbb R^N} G(z, y) u_k^{p-1}(z)\sum_{j=1}^k e^{-\tau|z-x_j|}\,dz
\\
\le & C\|\xi _k\|_{*} \int_{\mathbb R^N} G(z, y)\bigl( W_r^{p-1}(z)+ |\omega|^{p-1}\bigr)\sum_{j=1}^k e^{-\tau|z-x_j|}\,dz\\
\le & C\|\xi _k\|_{*} \int_{\mathbb R^N} G(z, y) \Bigl(\sum_{j=1}^k e^{- (p-1-\sigma+\tau)|z-x_j|}+\|\omega\|_*^{p-1} \sum_{j=1}^k e^{-(p\tau-\sigma) |z-x_j|}\Bigr)
\\
\le & C\|\xi_k\|_{*} \sum_{j=1}^k e^{-(p\tau-\sigma)|y-x_j|},
\end{split}
\]
where $\sigma\in (0, \tau)$ is any fixed small constant. So we obtain

\[
\frac{ |\xi_k(y)|}{ \sum_{j=1}^k e^{-\tau |y-x_j|}}\le C\|\xi_k\|_{*}\frac{ \sum_{j=1}^k e^{-(p\tau-\sigma) |y-x_j|}}{ \sum_{j=1}^k e^{-\tau |y-x_j|}}.
\]

Since $\xi_k\to 0$ in $B_R(x_{j, k})$  and $\|\xi_k\|_*=1$, we know that $\frac{ |\xi_k(y)|}{ \sum_{j=1}^k e^{-\tau |y-x_j|}}$
attains its maximum in $\mathbb R^N \setminus \cup_{j=1}^k B_R(x_{j, k})$. Thus

\[
\|\xi_k\|_* \le o(1)\|\xi_k\|_* .
\]
So $\|\xi_k\|_*\to 0$ as $k\to +\infty$. This is a contradiction to $\|\xi_k\|_*=1$.

\end{proof}

\section{Existence of new solutions}
\setcounter{equation}{0}

Let  $u_k$ be the  solutions constructed in section~2, where $k>0$ is a large even integer. Since $k$ is even, $u_k$ is even in each
$y_j$, $j=1,\cdots, N$.   Moreover,  $u_k$  is radial in $y''=(y_3, \cdots, y_N)$.

Let $n\ge k$ be a large even integer.
Set
\[
p_j=\Bigl(0, 0, t \cos\frac{2(j-1)\pi}n, t\sin\frac{2(j-1)\pi}n,0\Bigr),\quad j=1,\cdots,n,
\]
where   $t \in \bigl[ t_0 n\ln n,\; t_1 n \ln n\bigr] $.

  Define
\[
\begin{split}
X_s=\Bigl\{ u: & u\in H_s, u\;\text{is even in} \;y_h, h=1,\cdots,N,\\
& u(y_1, y_2, t\cos\theta , t\sin\theta, y^*)=
u(y_1,y_2, t\cos(\theta+\frac{2\pi j}n) , t\sin(\theta+\frac{2\pi j}n), y^*)
\Bigr\}.
\end{split}
\]
Here $y^*=(y_5,\cdots,y_N)$.

Note that   both $u_k $  and $\sum_{j=1}^n U_{p_j}$ belong to $ X_s$, while  $u_k$  and $\sum_{j=1}^n U_{p_j, \lambda}$ are separated  from each other. We aim to construct a solution for \eqref{1.4} of the form
\[
u= u_k +\sum_{j=1}^n U_{p_j,\lambda} +\xi,
\]
where $\xi\in X_s$ is a small perturbed term.  Since the procedure to construct such solutions are similar to that in \cite{WY},
we just sketch it.

We define the linear operator
\begin{equation}\label{10-29-3}
 Q_n \xi= -\Delta \xi +V(|y|) \xi-p\Bigl(  u_k+\sum_{j=1}^n U_{p_j}\Bigr)^{p-1}\xi,\quad \xi\in X_s.
 \end{equation}
We can regard $Q_n \xi$ as a function in $X_s$, such that

\begin{equation}\label{10-29-3}
\bigl\langle  Q_n \xi, \phi\bigr\rangle = \int_{\mathbb R^N} \Bigl( \nabla \xi\nabla \phi
+V(|y|) \xi\phi -p\bigl(  u_k+\sum_{j=1}^n U_{p_j}\bigr)^{p-1}\xi\phi\Bigr),\quad \xi, \, \phi\in X_s.
 \end{equation}

Let
 \[
\tilde Z_{j}=\frac{\partial U_{p_j }}{\partial t},\;\;j=1,\cdots,k.
  \]

 Let  $h_n\in X_s$. Consider
\begin{equation}\label{20-2-4}
\begin{cases}
Q_n \xi_n = h_n +  c_{n} \sum\limits_{j=1}^n  \tilde   Z_{j}, \\
\xi_n \in X_s,\\
\displaystyle \sum_{j=1}^n \int_{\mathbb R^N} U_{p_j}^{p-1} \tilde Z_{j} \xi_n=0,
\end{cases}
\end{equation}
for some constants $c_{n}$, depending on $\xi_n$.

Let
\[
D_j=\Bigl\{ y=(y',y_3, y_4, y^*)\in\R^2\times \mathbb R^2\times \R^{N-4}:
 \Bigl\langle \frac {(0, 0, y_3, y_4, 0,\cdots,0)}{|(y_3, y_4)|}, \frac{p_{ j}}{|p_{ j}|}\Bigr\rangle\ge \cos \frac{\pi}{n}\Bigr\}.
\]

\begin{lemma}\label{l20-2-4}

Assume that $\xi_n$ solve \eqref{20-2-4}.  If $\|h_n\|_{H^1(\mathbb R^N)}\to 0$, then $\|\xi_n\|_{H^1(\mathbb R^N)}\to 0$.

\end{lemma}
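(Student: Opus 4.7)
The plan is a standard contradiction/blow-up argument, but the novelty is that the orthogonality is only imposed against the kernel modes of the new bumps $U_{p_j}$, so the non-degeneracy of $u_k$ (Theorem~\ref{th11}) must be used to rule out mass concentrating on the old bumps. Assume the conclusion fails, and, after normalization, suppose $\|\xi_n\|_{H^1(\mathbb R^N)}=1$, $\|h_n\|_{H^1(\mathbb R^N)}\to 0$, with $(\xi_n,c_n)$ solving \eqref{20-2-4}.

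First, I would show $c_n\to 0$. Pairing \eqref{20-2-4} with $\tilde Z_1$, the left side gives $\langle Q_n\xi_n,\tilde Z_1\rangle$, which, integrating by parts and using $-\Delta\tilde Z_1+\tilde Z_1=pU_{p_1}^{p-1}\tilde Z_1$, reduces to contributions of order $O\bigl(\|\xi_n\|_{H^1}\bigl(\|(V-1)U_{p_1}\|_{L^2}+\|(u_k+\sum U_{p_j})^{p-1}-U_{p_1}^{p-1}\|\bigr)\bigr)+o(1)$, which vanishes as $n\to\infty$ (by $t_n\to+\infty$, far separation of the bumps, and $u_k$ being fixed and exponentially decaying far from its support). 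On the right side, $\sum_j\int\tilde Z_j\tilde Z_1\to\int|\tilde Z_1|^2\ne 0$, yielding $c_n\to 0$.

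Next, I would analyze where $\xi_n$ can concentrate. There are essentially three regimes: (i) near one of the new bumps $p_j$, (ii) near the support of $u_k$, (iii) away from all bumps. For (i), by the $n$-fold symmetry in $X_s$ it suffices to look at $\tilde\xi_n(y):=\xi_n(y+p_1)$. A standard local elliptic estimate gives $\tilde\xi_n\to\tilde\xi$ in $C^1_{loc}$, with $-\Delta\tilde\xi+\tilde\xi=pU^{p-1}\tilde\xi$, so $\tilde\xi=\sum_{i=1}^N b_i\partial_{y_i}U$. The symmetries in $X_s$ descend as follows: evenness in $y_1,y_2$ is preserved (since $p_1$ has zero $y_1,y_2$-coordinates), evenness in $y^*=(y_5,\dots,y_N)$ is preserved, and the reflection $y_4\mapsto -y_4$ fixes $p_1=(0,0,t,0,\dots,0)$ and therefore survives in the limit. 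This kills $b_1,b_2,b_4,b_5,\dots,b_N$, leaving only $\tilde\xi=b_3\,\partial_{y_3}U$, which is exactly the direction $\tilde Z_1$. The orthogonality constraint in \eqref{20-2-4} then forces $b_3\int U^{p-1}(\partial_{y_3}U)^2=0$, i.e.\ $\tilde\xi\equiv 0$. For (ii), since $u_k$ and its bumps stay in a fixed bounded region of the $(y_1,y_2)$-plane, the limit $\xi^{\infty}$ of $\xi_n$ on that region belongs to $H_s$ (inherited from $X_s\subset H_s$), and $Q_n\xi_n\to h_n$ together with the exponential decay of the remote bumps $U_{p_j}$ near the support of $u_k$ yield $L_k\xi^{\infty}=0$. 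By Theorem~\ref{th11}, $\xi^{\infty}\equiv 0$.

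Finally, having vanishing in $C^1_{loc}$ on balls around each $p_j$ and near the support of $u_k$, I would upgrade to $\|\xi_n\|_{H^1}\to 0$ by representing $\xi_n$ via the Green's function of $-\Delta+V(|y|)$ (as in the proof of Lemma~\ref{l21}) and estimating $\|\bigl(u_k+\sum U_{p_j}\bigr)^{p-1}\xi_n\|$ in an appropriate weighted norm using the local vanishing; together with $c_n\to 0$ and $\|h_n\|_{H^1}\to 0$, this contradicts $\|\xi_n\|_{H^1}=1$. The main technical obstacle is case (ii): the possible concentration near $u_k$ produces a nontrivial $H_s$-kernel element for $L_k$, and only Theorem~\ref{th11} rules it out; the bookkeeping required to transfer the global orthogonality (only against $\tilde Z_j$) into a useful control on that part of $\xi_n$ is the delicate step that has to be handled carefully.
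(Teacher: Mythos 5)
The overall strategy — contradiction, translation near the new bumps $p_j$ combined with the symmetry of $X_s$ and the orthogonality condition, and invoking Theorem~\ref{th11} to rule out concentration near $u_k$ — is the same as in the paper, but there are two points where your write-up does not actually close.

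\textbf{(1) The normalization makes case (i) vacuous.} You normalize $\|\xi_n\|_{H^1}=1$. But $\xi_n$ is $n$-fold symmetric, so the $H^1$ mass of $\xi_n$ in a ball $B_R(p_j)$ is the same for each $j$, and summing over $j$ (the balls being disjoint) gives $\|\xi_n\|_{H^1(B_R(p_j))}\le n^{-1/2}$. Hence $\tilde\xi_n=\xi_n(\cdot+p_1)\to0$ in $H^1(B_R)$ (and in $C^1_{loc}$ by elliptic regularity) for trivial reasons; the symmetry and orthogonality argument you then carry out (evenness in $y_1,y_2,y_4,y^*$ plus orthogonality to $\partial_{y_3}U$) only proves $0=0$. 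The quantity that actually carries information is $\sqrt n\,\tilde\xi_n$, and it is this rescaled function whose $H^1_{loc}$ limit has to be shown to vanish by the kernel classification, the inherited parities, and the orthogonality constraint. This is exactly why the paper fixes the scale $\|\xi_n\|_{H^1}^2=n$ from the outset: then $\tilde\xi_n$ has a genuine (possibly nonzero) weak $H^1_{loc}$ limit, and the orthogonality/symmetry argument is doing real work. Correspondingly, the energy contribution $\sum_j\int U_{p_j}^{p-1}\xi_n^2$ is only $O(1)$ with your normalization (not $o(1)$), because each of the $n$ bumps contributes $O(1/n)$; to make it $o(1)$ you need the extra $o(1)$ from $\sqrt n\,\tilde\xi_n\to0$.

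\textbf{(2) The ``upgrade'' step.} The paper does not use a Green's function/pointwise estimate at this stage. It simply pairs the equation \eqref{20-2-4} with $\xi_n$: since $c_n\to0$, $\langle h_n,\xi_n\rangle=o(n)$ and, by the vanishing of the translated limits and of the weak limit near $u_k$'s support, $\int(u_k+\sum_j U_{p_j})^{p-1}\xi_n^2=o(n)$, the identity
\[
\int_{\mathbb R^N}\bigl(|\nabla\xi_n|^2+V\xi_n^2\bigr)=p\int(u_k+\textstyle\sum_j U_{p_j})^{p-1}\xi_n^2+\langle h_n,\xi_n\rangle+c_n\bigl\langle\textstyle\sum_j\tilde Z_j,\xi_n\bigr\rangle=o(n)
\]
together with $V\ge c_0>0$ contradicts $\|\xi_n\|_{H^1}^2=n$. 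This is much simpler and avoids introducing a weighted $\|\cdot\|_*$-type norm (which would require controlling $\|\xi_n\|_*$, something your $H^1$-normalization alone does not give). Your identification of case~(ii) — weak limit near $u_k$'s support lies in $H_s$, satisfies $L_k\xi^\infty=0$, and is killed by Theorem~\ref{th11} — is correct and matches the paper, and this is indeed the place where the non-degeneracy theorem is needed.
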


\begin{proof}

 For simplicity, we will use  $\| u\|$ to denote $\|u \|_{H^1(\mathbb R^N)}$.

We argue by contradiction.  Suppose that there are $p_{n, j}$,   $h_n$  and $\xi_n$, satisfying \eqref{20-2-4},    $\|h_n\|\to 0$ and $\|\xi_n\|\ge c>0$.
We may assume  $\|\xi_n\|^2 =n$.  Then $\|h_n\|^2 = o(n)$.

First, we estimate  $c_n$.  We have
\[
c_n \sum_{j=1}^n \int_{\mathbb R^N}   U_{p_{n,j}}^{p-1}\tilde  Z_{j} \tilde  Z_{1}=  \bigl\langle Q_n \xi_n - h_n,   \tilde Z_{1}\bigr\rangle,
\]

from which we can proved  that $c_{n}=o(1)$.

 On the other hand,  from $\int_{\mathbb R^N} U_{p_{n,j},\lambda_n}^{2^*-2} Z_j \xi_n=0$, it is standard to prove that
\[
 \xi_n( y+ p_{n, j}) \to  0,\quad \text{in}\;  H^1_{loc}(\mathbb R^N).
 \]

 Moreover, since  $\frac{1}{\sqrt n} \xi_n$ is bounded in $H^1(\mathbb R^N)$, we can assume that
\[
 \frac{1}{\sqrt n} \xi_n\rightharpoonup \xi,\quad \text{weakly in }\; H^1(\mathbb R^N),
 \]
 and
\[
 \frac{1}{\sqrt n} \xi_n\to  \xi,\quad \text{strongly in }\; L^2_{loc}(\mathbb R^N).
 \]
Thus, $\xi$ satisfies
\[
-\Delta \xi+V(|y|) \xi -pu_k^{p-1}\xi =0, \quad \text{in}\; \mathbb R^N.
\]
By  Theorem~\ref{th11}, $\xi=0$.  Therefore,
\[
\int_{\mathbb R^N}  \bigl(  u_k+\sum_{j=1}^n U_{p_{n, j},\lambda_n}\bigr)^{p-1}\xi_n^2=o(n).
 \]

 We also have
 \[
 \langle  h_n,   \xi_n\bigr\rangle= o(n),
 \]
 and
\[
 \Bigl\langle  \sum_{j=1}^n    \tilde Z_{j}, \xi_n \Bigr\rangle  =O(n).
 \]
 So we obtain
\[
 \begin{split}
 & \int_{\mathbb R^N} |\nabla \xi_n|^2 \\
 =&O\Big(\int_{\mathbb R^N}  \Bigl(  u_k+\sum_{j=1}^n U_{p_{n, j},\lambda_n}\Bigr)^{p-1}\xi_n^2 \Big)+ \bigl\langle   h_n,  \xi_n\bigr\rangle +
 \int_{\mathbb R^N} c_n \Bigl\langle  \sum_{j=1}^n   \tilde  Z_{j}, \xi_n \Bigr\rangle\\
 =&
 o(n).
 \end{split}
  \]
This is a contradiction.

\end{proof}

 From now on, we assume that $N\ge 4$.
We want to construct a solution $u$ for \eqref{1.4}  with
\[
u= u_k +\sum_{j=1}^n  U_{p_j,\lambda} +\xi,
\]
where $\xi\in X_s$ is a small perturbed term, satisfying
\[
\sum_{j=1}^n \int_{\mathbb R^N}  U_{p_j}^{p-1}\tilde  Z_{j} \xi=0.
 \]
 Then $\xi$ satisfies
\begin{equation}\label{3-22-12}
  Q_n \xi = l_n+R(\xi),
 \end{equation}
where
\begin{equation}\label{4-22-12}
   l_n =\sum_{j=1}^n \bigl(  V(|y|)-1\bigr)U_{p_j}+  \Bigl(  u_k+\sum_{j=1}^n U_{p_j}\Bigr)^{p} -  u_k^{p} -\sum_{j=1}^n U_{p_j}^{p},
 \end{equation}
and
\begin{equation}\label{5-22-12}
 \begin{split}
   R_n(\xi) = &  \Bigl(  u_k+\sum_{j=1}^n U_{p_j}+\xi\Bigr)^{p}-\Bigl(  u_k+\sum_{j=1}^n U_{p_j}\Bigr)^{p}\\
   &-p \Bigl(  u_k+\sum_{j=1}^n U_{p_j}\Bigr)^{p-1}\xi.
   \end{split}
 \end{equation}

We have the following estimate for $\|l_n\|$.
\begin{lemma}\label{c4-l1-25-3}
There is a small $\sigma>0$, such that
\[
\|l_n\|\le \frac{C }{|p_1|^\alpha}+C e^{- \min[\frac p2-\sigma, 1]|p_2-p_1|},
\]
where $\sigma>0$ is any fixed small constant.

\end{lemma}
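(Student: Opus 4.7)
The plan is to mirror the proof of Lemma~\ref{100l1-25-3}, splitting $l_n$ into a potential piece and a nonlinear cross-term piece, and treating the latter with a symmetric decomposition in $n$ cells about the points $p_j$. Write $l_n = A_n + B_n$, where
\[
A_n = \sum_{j=1}^n \bigl(V(|y|)-1\bigr) U_{p_j}, \qquad B_n = \Bigl(u_k+\sum_{j=1}^n U_{p_j}\Bigr)^p - u_k^p - \sum_{j=1}^n U_{p_j}^p .
\]
Since the whole configuration is symmetric under the rotation sending $p_j$ to $p_{j+1}$, it suffices to estimate $A_n$ and $B_n$ restricted to $D_1$; the global bound then follows by summing over $j$.

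For the potential part, I would use \eqref{V1} to write $|V(|y|)-1| \le C(1+|y|)^{-\alpha}$. On the region where $U_{p_j}$ is not exponentially small, say $|y-p_j|\le |p_j|/2$, one has $|y|\ge |p_j|/2 = t/2$, so $|V(|y|)-1|\le C/t^\alpha$. Away from this ball $U_{p_j}$ contributes only a term that is exponentially small in $t$, hence absorbed in the first summand. In $D_1$ the sum $\sum_j U_{p_j}$ is dominated by $U_{p_1}$ up to exponentially small errors involving $e^{-|p_2-p_1|}$, so this piece contributes at most $C/|p_1|^\alpha$.

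For $B_n$ on $D_1$, the key observation is that $U_{p_1}$ dominates $\sum_{j\ne 1} U_{p_j}$ and that $u_k$ is essentially orthogonal in space to every $U_{p_j}$. Concretely, $u_k$ is concentrated on the circle $\{x_i\}$ in the $(y_1,y_2)$-plane at radius $r_k\sim k\ln k$, while $p_j$ sits in the $(y_3,y_4)$-plane at radius $t\sim n\ln n$; consequently $|x_i-p_j|\ge t$, and any product of (non-trivial) powers of $u_k$ with powers of $U_{p_j}$ is pointwise bounded by $Ce^{-\eta t}$ for some $\eta>0$. Since $|p_2-p_1|\sim 2\pi t/n \ll t$, these $u_k$–$U_{p_j}$ cross terms are much smaller than $e^{-\min[p/2-\sigma,1]|p_2-p_1|}$ and can be absorbed. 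The remaining cross terms come from the $p$-power expansion of $(U_{p_1}+\sum_{j\ne1}U_{p_j})^p - U_{p_1}^p - \sum_{j\ne 1} U_{p_j}^p$; these are handled exactly as in Lemma~\ref{100l1-25-3}, splitting into $p>2$ (bound by $U_{p_1}^{p-1}\sum_{j\ne 1} U_{p_j}$) and $1<p\le 2$ (bound by $\sum_{j\ne 1} U_{p_1}^{p/2}U_{p_j}^{p/2}$), and then using $|y-p_j|\ge |y-p_1|$ on $D_1$ together with $U(|p_j-p_1|)\le Ce^{-|p_j-p_1|}$.

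The main obstacle, as in the original setting, is tracking the algebraic gymnastics for $p\le 2$ where the naive product bound is not sharp and one must carefully distribute the exponential weights between the two factors so that the decay in $|y-p_1|$ matches the weight used in the norm. A secondary bookkeeping point is that we must verify that the $n$-fold sum of per-bump contributions matches the stated single-bump rate in whichever norm is intended in \eqref{20-2-4} (either an $H^1$–$H^{-1}$ pairing or a weighted sup norm analogous to $\|\cdot\|_*$); the symmetry of the configuration and the rapid decay between distinct $U_{p_j}$'s ensure the constants do not blow up with $n$.
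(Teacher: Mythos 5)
The paper does not actually prove this lemma --- its ``proof'' reads, in full, ``The proof of this lemma is quite standard. We thus omit it.'' --- so there is no argument in the paper to compare against. Your reconstruction is the natural one and captures the essential new idea correctly. The only ingredient that is genuinely new relative to Lemma~\ref{100l1-25-3} is the cross-interaction between $u_k$ and the new bumps $U_{p_j}$, and you handle it by exactly the right observation: since each $x_i$ lies in the $(y_1,y_2)$-plane at radius $r_k$ while each $p_j$ lies in the $(y_3,y_4)$-plane at radius $t$, one has $|x_i-p_j|=\sqrt{r_k^2+t^2}\ge t$, so products $u_k^a U_{p_j}^b$ with $a,b>0$ decay like $e^{-\eta t}$, which is negligible against $e^{-c|p_2-p_1|}$ because $|p_2-p_1|\sim 2\pi t/n\ll t$. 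The remaining $U_{p_i}$--$U_{p_j}$ interactions and the potential term $(V-1)U_{p_j}$ are estimated exactly as in Lemma~\ref{100l1-25-3}, with the same $p>2$ vs.\ $1<p\le 2$ case split.

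The place where you hedge --- the norm, and whether the $n$-fold superposition introduces a factor growing with $n$ --- is a real point, and your resolution (``the constants do not blow up with $n$'') is too quick. In Section~4 the paper explicitly fixes $\|u\|=\|u\|_{H^1(\mathbb R^N)}$, and $l_n$ is to be read through the $X_s$--$X_s^*$ pairing, so $\|l_n\|$ is an $H^{-1}$-type norm. Because $l_n$ is a superposition of $n$ nearly-orthogonal per-bump contributions, its $L^2$ (hence $H^{-1}$) norm scales like $\sqrt n$ times the per-bump size. Your pointwise estimates give the per-bump size, and integrating them actually yields
\[
\|l_n\|\le \sqrt n\,\Bigl(\frac{C}{t^\alpha}+Ce^{-\min[\frac p2-\sigma,1]|p_2-p_1|}\Bigr),
\]
not the stated bound. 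The paper's own statement of Lemma~\ref{c4-l1-25-3} also omits this $\sqrt n$, and the contradiction argument in Lemma~\ref{l20-2-4} normalizes $\|\xi_n\|^2=n$, which strongly suggests the whole section works at the $\sqrt n$ scale and the statement is simply written sloppily; so this defect originates in the source. But you should not assert the factor is absent --- either carry the $\sqrt n$ through explicitly (and check that it cancels consistently when it is fed back into Lemma~\ref{l20-2-4} and the contraction mapping), or state the estimate in the $\|\cdot\|_*$-type weighted sup norm, where the per-bump bound really is uniform in $n$.
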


\begin{proof}

The proof of this lemma is quite standard. We thus omit it.

\end{proof}

It is also standard to prove the  following lemma.

\begin{lemma}\label{l1-22-12}
\[
\|R_n(\xi)\|\le C\|\xi\|^{\min(p, 2)}.
\]
Moreover,
\[
\|R_n(\xi_1)- R_n(\xi_2)\|\le C \Bigl(  \|\xi_1\|^{\min(p-1, 1)} +\|\xi_2\|^{\min(p-1, 1)}\Bigr)\|\xi_1-\xi_2\|.
\]

\end{lemma}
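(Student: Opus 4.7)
The plan is to reduce both inequalities to pointwise bounds obtained via Taylor's theorem applied to the nonlinearity $t\mapsto t^p$, and then convert them into bounds on $\|\cdot\|$ by H\"older's inequality together with the subcritical Sobolev embedding $H^1(\mathbb R^N)\hookrightarrow L^{p+1}(\mathbb R^N)$. Throughout, I would write $W = u_k + \sum_{j=1}^n U_{p_j}$, which is uniformly bounded in $L^\infty(\mathbb R^N)$ independently of $n$ and $t$, and treat $R_n(\xi)$ as an $H^{-1}$ functional acting on $X_s$ via the $L^2$ pairing $\langle R_n(\xi),\phi\rangle=\int R_n(\xi)\,\phi$.

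For the first inequality I would split into two cases according to the size of $p$. When $p\ge 2$, second-order Taylor expansion around $\xi=0$ yields the pointwise bound
\[
|R_n(\xi)|\le C\bigl(W^{p-2}\xi^2 + |\xi|^p\bigr).
\]
Using $\|W\|_{L^\infty}\le C$, H\"older's inequality, and the embedding $H^1\hookrightarrow L^{p+1}$, one obtains $\|R_n(\xi)\|\le C\|\xi\|^2$ for $\|\xi\|$ small, after checking that all exponents produced by H\"older fall in the Sobolev range (this step uses $p\ge 2$ together with $p<(N+2)/(N-2)$, which in particular restricts to $N\le 5$). When $1<p<2$, the elementary inequality $|(a+b)^p-a^p-pa^{p-1}b|\le C|b|^p$ for $a\ge 0$ gives $|R_n(\xi)|\le C|\xi|^p$ directly, whence $\|R_n(\xi)\|\le C\|\xi\|^p$. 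Combining the two cases produces the stated exponent $\min(p,2)$.

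For the Lipschitz-type estimate I would write, via the mean value theorem,
\[
R_n(\xi_1)-R_n(\xi_2) = p\bigl[(W+\eta)^{p-1}-W^{p-1}\bigr](\xi_1-\xi_2),
\]
for some $\eta$ on the segment between $\xi_1$ and $\xi_2$, and then apply the mean value theorem a second time to $t\mapsto t^{p-1}$. This yields the pointwise bound $\bigl|(W+\eta)^{p-1}-W^{p-1}\bigr|\le C(W^{p-2}|\eta|+|\eta|^{p-1})$ when $p\ge 2$, and $\le C|\eta|^{p-1}$ when $1<p<2$. Since $|\eta|\le|\xi_1|+|\xi_2|$, H\"older and Sobolev applied exactly as before convert these pointwise bounds into the claimed inequality, with the factor $\|\xi_1\|^{\min(p-1,1)}+\|\xi_2\|^{\min(p-1,1)}$ appearing naturally from the $L^{p+1}$-type estimate of $|\eta|^{p-1}$.

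The only substantive bookkeeping lies in verifying that every H\"older exponent produced along the way falls within the range accessible through the embedding $H^1\hookrightarrow L^{p+1}$, which is precisely what the subcritical condition $1<p<(N+2)/(N-2)$ guarantees. Since the computation is entirely parallel to the corresponding remainder estimates in the construction of multi-bump solutions in \cite{WY} and \cite{GMPY}, I would present only the Taylor identities above and then refer to those references for the routine manipulations, which is consistent with the authors' remark that the proof is standard and omitted.
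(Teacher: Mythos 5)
The paper gives no proof of this lemma at all---it is prefaced by ``It is also standard to prove the following lemma'' and left unproved---so there is no argument of the authors' to compare against. Your fill-in is the expected standard argument: pointwise Taylor/mean-value bounds on the nonlinearity $t\mapsto t^p$, split according to $p\ge 2$ versus $1<p<2$, converted to dual-norm estimates via H\"older and the subcritical Sobolev embedding $H^1\hookrightarrow L^{p+1}$. This is correct as far as it goes; the only small points worth making explicit are (i) that one implicitly uses $\|\xi\|\le 1$ (resp.\ $\|\xi_1\|,\|\xi_2\|\le 1$) so that the higher-order terms $\|\xi\|^p$ and $\|\xi_j\|^{p-1}$ are absorbed when $p\ge 2$, which is consistent with how the lemma is applied in Proposition~\ref{p1-6-3}, and (ii) that for non-integer $p$ one should understand $t^p$ as $|t|^{p-1}t$ (or work with $(t)_+^p$) so the mean-value argument is valid when $W+\eta$ may change sign; both are routine and in keeping with the authors' decision to call the proof standard.
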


 We consider the following problem:
\begin{equation}\label{80-3-4}
\begin{cases}
Q_n \xi_n =  l_n + R_n (\xi_n)+ C_N \sum\limits_{j=1}^n   \tilde Z_{j}, \\
\xi_n \in X_s,\\
\displaystyle\sum_{j=1}^n \int_{\mathbb R^N} U_{p_j}^{p-1} \tilde Z_{j} \xi_n=0.
\end{cases}
\end{equation}

Using Lemmas~\ref{l20-2-4}, \ref{c4-l1-25-3} and \ref{l1-22-12}, we can prove the following proposition in a standard way.
\begin{proposition}\label{p1-6-3}
There is an integer $n_0>0$, such that for each $n\ge n_0$  and  $t \in  [t_0 n \ln n, t_1
 n \ln n]$ ,  \eqref{80-3-4} has a solution $\xi_n$ for some constants
$c_n$.   Moreover,  $\xi_n$
is a $C^1$ map from $ [t_0 n \ln n, t_1
 n \ln n]$ to $X_s$,
and
\[
\|\xi_n\|\le  \frac{C }{|p_1|^\alpha}+C e^{- \min[\frac p2-\sigma, 1]|p_2-p_1|}
\]
for any fixed small constant $\sigma>0$.

\end{proposition}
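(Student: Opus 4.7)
The plan is to solve the projected nonlinear problem \eqref{80-3-4} by a Banach contraction argument on a small ball in the constrained subspace of $X_s$, and then to obtain $C^1$ dependence on $t$ via the implicit function theorem. The three inputs are already in place: the linear invertibility of $Q_n$ on the orthogonal subspace (Lemma~\ref{l20-2-4}), the size estimate for the error $l_n$ (Lemma~\ref{c4-l1-25-3}), and the smallness plus Lipschitz estimates for the remainder $R_n$ (Lemma~\ref{l1-22-12}).

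The first step is to upgrade Lemma~\ref{l20-2-4} to a quantitative uniform a priori bound: there exist $\rho>0$ and $n_0$ such that for every $n\ge n_0$ and every solution $(\xi_n,c_n)$ of \eqref{20-2-4} one has $\|\xi_n\|\le\rho^{-1}\|h_n\|$. This is a standard contradiction argument --- were it to fail, one could normalize $\|\xi_n\|=1$ and obtain $\|h_n\|\to 0$, violating Lemma~\ref{l20-2-4}. Consequently $Q_n$ admits a uniformly bounded right inverse $A_n$ on the closed subspace
\[
Y_n=\Bigl\{\xi\in X_s:\ \sum_{j=1}^n\int_{\R^N} U_{p_j}^{p-1}\tilde Z_j\,\xi=0\Bigr\}
\]
modulo the one-dimensional span of $\sum_{j=1}^n\tilde Z_j$.

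Rewrite \eqref{80-3-4} as the fixed-point equation $\xi=T_n(\xi):=A_n\bigl(l_n+R_n(\xi)\bigr)$ on $Y_n$, and set $\delta_n:=2\rho^{-1}\|l_n\|$, $B_n:=\{\xi\in Y_n:\|\xi\|\le\delta_n\}$. Lemma~\ref{c4-l1-25-3} ensures $\delta_n\to 0$; and for $\xi\in B_n$, Lemma~\ref{l1-22-12} yields
\[
\|T_n(\xi)\|\le \rho^{-1}\bigl(\|l_n\|+C\|\xi\|^{\min(p,2)}\bigr)\le \tfrac12\delta_n+\rho^{-1}C\,\delta_n^{\min(p,2)}.
\]
Since $p>1$ forces $\min(p,2)>1$, the second term is $o(\delta_n)$, so $T_n(B_n)\subset B_n$ for $n$ large. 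The Lipschitz half of Lemma~\ref{l1-22-12} similarly gives
\[
\|T_n(\xi_1)-T_n(\xi_2)\|\le \rho^{-1}C\,(2\delta_n)^{\min(p-1,1)}\|\xi_1-\xi_2\|=o(1)\,\|\xi_1-\xi_2\|,
\]
so $T_n$ is a contraction on $B_n$. Banach's fixed point theorem produces the unique $\xi_n\in B_n$, with $\|\xi_n\|\le\delta_n$, which is exactly the bound claimed in the proposition after inserting Lemma~\ref{c4-l1-25-3}.

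For the $C^1$ dependence on $t$, I would apply the implicit function theorem to the smooth map $F_n(\xi,t,c):=Q_n(t)\xi-l_n(t)-R_n(\xi,t)-c\sum_{j=1}^n\tilde Z_j(t)$ viewed as a function of $(\xi,c)$ with parameter $t$. Its derivative in $(\xi,c)$ at the solution is the linear operator inverted in the first step, perturbed by a map of norm $O(\|\xi_n\|^{\min(p-1,1)})=o(1)$ coming from $R_n'(\xi_n)$, hence still uniformly invertible; smooth dependence of $u_k$ (fixed), $U_{p_j(t)}$ and $\tilde Z_j(t)$ on $t$ then transfers to $\xi_n$. The main obstacle is genuinely the uniform invertibility in the first step; everything after it is routine bookkeeping, relying only on the elementary fact that $p>1$ makes $\min(p,2)>1$ and $\min(p-1,1)>0$, which is what closes the contraction.
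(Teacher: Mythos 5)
Your proposal is correct and is precisely the standard Lyapunov--Schmidt contraction-mapping argument that the paper invokes without spelling out (it simply says the proposition follows ``in a standard way'' from Lemmas~\ref{l20-2-4}, \ref{c4-l1-25-3} and \ref{l1-22-12}). Your upgrade of Lemma~\ref{l20-2-4} to a uniform a priori bound via contradiction, the fixed-point argument on a ball of radius $O(\|l_n\|)$ using $\min(p,2)>1$, and the implicit-function-theorem step for $C^1$ dependence on $t$ are exactly the steps the authors have in mind.
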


Let
\[
F(t)= I\Bigl( u_{k}+\sum_{j=1}^n U_{p_j}
+\xi_{n}\Bigr).
\]
To obtain a solution of the form $ u_{k}+\sum_{j=1}^n U_{p_j,\lambda}
+\xi_{n}$, we just need to find a critical point for $F(t)$ in $[t_0 n \ln n, t_1
 n \ln n]$.

\begin{proof}[Proof of Theorem~\ref{th11}]

 We have

\begin{equation}\label{3-20-4}
 \begin{split}
 F(t, \lambda)=&   I\Bigl( \sum_{j=1}^n U_{p_j,\lambda}\Bigr) +I( u_k)+ n  O\bigl(  \frac{1 }{t^{2\alpha}}+ e^{- \min[ p-2\sigma, 2]|p_2-p_1|}\bigr)\\
 =& I(u_k)+n A+n\Bigl( \frac{  B_1  }{ t^\alpha}  - (B_2+o(1)) U(|p_2-p_1|)\Bigr)\\
 &+ n  O\bigl(  \frac{1 }{t^{\alpha+1}}+ e^{- \min[ p-2\sigma, 2]|p_2-p_1|}\bigr),
\end{split}
 \end{equation}
  where $A = \frac12\int_{\mathbb R^N}  \bigl(|\nabla U|^2+ U^2\bigr) -\frac1{p+1}\int_{\mathbb R^N} U^{p+1}$, $B_1$ and  $B_2$  are some positive constants,
 and $\sigma>0$ is any fixed small constant.\\

Now  to find a critical point for $F(t)$, we just need to  proceed exactly as in \cite{WY}.
\end{proof}
\bigskip
\noindent\textbf{Acknowledgements} \,\,\,Y. Guo was supported by NNSF of China (No. 11771235). M. Musso was supported by EPSRC research grant EP/T008458/1. S. Peng was supported by NNSF of China (No. 11571130, No. 11831009). S. Yan was supported by NNSF of China (No. 11629101).


\begin{thebibliography}{999}


\bibitem{ABC}
A. Ambrosetti, M. Badiale, S. Cingolani,  Semiclassical states of
nonlinear Schr\"{o}dinger equations.   Arch. Rational Mech. Anal. 140
(1997), 285--300.



\bibitem{AMN1}
A. Ambrosetti, A. Malchiodi, W.M. Ni, Singularly perturbed elliptic
equations with symmetry: existence of solutions concentrating on
spheres. I.  Comm. Math. Phys. 235 (2003),  427--466.

\bibitem{AMN2}
A. Ambrosetti, A. Malchiodi, W.M. Ni, Singularly perturbed elliptic
equations with symmetry: existence of solutions concentrating on
spheres. II.  Indiana Univ. Math. J. 53 (2004),  297--329.

\bibitem{BL} A. Bahri, Y.Y.  Li,  On a min-max procedure for the existence of a positive solution for certain scalar field equations in $\R^N$. Rev. Mat. Iberoamericana 6 (1990), no. 1-2, 1-15.



\bibitem{BLi}
A. Bahri, P. L. Lions, On the existence of a positive solution of
semilinear elliptic equations in unbounded domains.  Ann. Inst. H. Poincar\'{e} Anal. Non Lin\'{e}aire 14 (1997),  365-413.


\bibitem{C} D. Cao, Positive solution and bifurcation from the essential spectrum of a semilinear elliptic equation on $\R^N$. Nonlinear Anal. 15 (1990), no. 11, 1045-1052.


\bibitem{CNY1}
D. Cao, E.S. Noussair, S. Yan, Existence and uniqueness results on
single-peaked solutions of a semilinear problem.  Ann. Inst. H.
Poincare Anal. Non Lineaire 15 (1998), 73--111.

\bibitem{CNY2}
D. Cao, E.S. Noussair, S. Yan,  Solutions with multiple peaks for
nonlinear elliptic equations.
 Proc. Roy. Soc. Edinburgh Sect. A 129 (1999), no. 2, 235--264.


\bibitem{CDS}
G. Cerami, G. Devillanova, S. Solimini,  Infinitely many bound states
for some nonlinear scalar field equations. Calc. Var. Partial
Differential Equations 23 (2005),  139--168.


\bibitem{DY}
E.N. Dancer, S. Yan,  On the existence of multipeak solutions for
nonlinear field equations on $\R^N$.  Discrete Contin. Dynam. Systems
6 (2000), 39--50.


\bibitem{DF1}
M. del Pino, P.  Felmer,   Local mountain passes for semilinear
elliptic problems in unbounded domains.  Calc. Var. Partial
Differential Equations. 4 (1996), 121--137.

\bibitem{DF2}
M. del Pino, P.  Felmer,  Multi-peak bound states of nonlinear
Schr\"inger equations.  Ann. Inst. H. Poincar\'e, Analyse
Nonlineaire  15(1998), 127--149.

\bibitem{DF3}
M. del Pino, P.  Felmer,   Semi-classcal states for nonlinear
Schr\"{o}dinger equations.  J. Funct. Anal. 149(1997), 245--265.

\bibitem{DF4}
M. del Pino, P.  Felmer,
 Semi-classical states of nonlinear Schr\"oinger equations: a
variational reduction method.  Math. Ann. 324 (2002), 1--32.



\bibitem{DN} W. Ding, W.-M. Ni, On the existence of positive entire solutions of a
semilinear elliptic equation. Arch. Rational Mech. Anal.
 91 (1986), 283--308.



\bibitem{GMPY}
Y.Guo, M.Musso, S.Peng, S. Yan,
Non-degeneracy of multi-bump  solutions for  the prescribed scalar curvature equations and applications, J. Funct. Anal., to appear.



\bibitem{Lion} P. L.  Lions, The concentration-compactness principle in the calculus of variations. The locally compact case. I. Ann. Inst. H. Poincar\'{e} Anal. Non Lin\'{e}aire 1 (1984), no. 2, 109-145.


\bibitem{NY}
E.S. Noussair, S. Yan, On positive multipeak solutions of a
nonlinear elliptic problem.  J. London Math. Soc. 62 (2000),
213-227.


\bibitem{R}
P.H. Rabinowitz,  On a class of nonlinear Schr\"odinger equations.
 Z. Angew. Math. Phys. 43 (1992) 270--291.

\bibitem{WY}
 J.   Wei,  S. Yan, Infinitely many positive solutions for the nonlinear Schr\"{o}dinger equations in $\R^N$. Calc. Var. Partial Differential Equations 37 (2010), 423-439.

\end{thebibliography}
\end{document}